\newtheorem{theorem}{Theorem}[section]
\newtheorem*{theorem*}{Theorem}
\newtheorem{corollary}[theorem]{Corollary}
\newtheorem{lemma}[theorem]{Lemma}
\newtheorem{definition}[theorem]{Definition}
\newtheorem{proposition}[theorem]{Proposition}
\newtheorem{remark}[theorem]{Remark}
\newcommand{\N}{\mathbb{N}}
\newcommand{\R}{\mathbb{R}}
\DeclareMathOperator{\pos}{pos}
\DeclareMathOperator{\Next}{next}
\DeclareMathOperator{\Span}{span}
\DeclareFontFamily{U}{matha}{\hyphenchar\font45}
\DeclareFontShape{U}{matha}{m}{n}{
      <5> <6> <7> <8> <9> <10> gen * matha
      <10.95> matha10 <12> <14.4> <17.28> <20.74> <24.88> matha12}{}
\DeclareSymbolFont{matha}{U}{matha}{m}{n}
\DeclareFontFamily{U}{mathx}{\hyphenchar\font45}
\DeclareFontShape{U}{mathx}{m}{n}{
      <5> <6> <7> <8> <9> <10>
      <10.95> <12> <14.4> <17.28> <20.74> <24.88>
      mathx10}{}
\DeclareSymbolFont{mathx}{U}{mathx}{m}{n}
\DeclareMathDelimiter{\vvvert}{0}{matha}{"7E}{mathx}{"17}
\newcommand{\vertiii}[1]{\left\vvvert #1 \right\vvvert}
\def\clap#1{\hbox to 0pt{\hss#1\hss}}
\def\mathclap{\mathpalette\mathclapinternal}
\def\mathclapinternal#1#2{%
\clap{$\mathsurround=0pt#1{#2}$}}
\title{The invariant subspace problem for the space of smooth functions on the real line}
\begin{document}

\author{Michał Goliński
\thanks{Faculty of Mathematics and Comp. Sci.,
Adam Mickiewicz University,
ul. Uniwersytetu Poznańskiego 4,
61-614 Poznań, POLAND}
\thanks{The authors research was partially
supported by National Science Centre (Poland)
Grant UMO-2013$\slash$10$\slash$A$\slash$ST1$\slash$00091.}
\thanks{\href{mailto:golinski@amu.edu.pl}{golinski@amu.edu.pl}} , 
Adam Przestacki
\setcounter{footnote}{0}\footnotemark \setcounter{footnote}{1} \footnotemark
\setcounter{footnote}{3}
\thanks{\href{mailto:adamp@amu.edu.pl}{adamp@amu.edu.pl}}
\thanks{Corresponding author}
}


\maketitle

\begin{abstract}
We construct a continuous
linear operator acting on the space of smooth functions
on the real line without non-trivial invariant subspaces.
This is a first example of such an operator
acting on a Fréchet
space without a continuous norm. The construction is based
on the ideas due to C. Read who constructed a continuous operator
without non-trivial invariant subspaces on the Banach space $\ell_1$.
\end{abstract}

Keywords:
\textit{Invariant subspace problem} $\cdot$
\textit{Cyclic vectors} $\cdot$
\textit{Space of smooth functions} $\cdot$
\textit{Sequence spaces}

\section{Introduction}
Let $X$ be a locally convex space.
The invariant subspace (subset) problem is the question if
every continuous linear operator $T\colon X\to X$
has a non-trivial invariant subspace (subset),
i.e., if there exists a closed subspace (subset)
$0\subsetneq H\subsetneq X$ such that $T(H)\subset H$.
The case of the separable Hilbert space has been studied
by multiple authors and is one of the most important
open problems in operator theory.

In the Banach space setting the first counterexamples to the invariant subspace problem were constructed by P. Enflo \cite{MR0473871,MR892591}
and C. Read \cite{MR749447,MR806634} in the 1980s. While Enflo constructed an operator
on an artificial Banach space, Read was able to build his counterexample on $\ell_1$.
An accessible exposition of the Read's construction
can be found in the last chapter of \cite{MR2533318}.
Later on Read \cite{MR959046,MR950973} improved his methods and was able to
build counterexamples on other Banach spaces.

A. Atzmon in \cite{MR701260} published a construction of an operator on
a nuclear Fréchet space (with a continuous norm) without
non-trivial invariant subspaces.

The first author was able to adapt the methods of Read and constructed
counterexamples to the invariant subspace problem for many classical
Fréchet spaces including the space of holomorphic functions on the unit disc
$H(\mathbb{D})$ and the space of rapidly decreasing sequences $s$ (see \cite{MR2863862}, see
also \cite{MR3077885} for an operator without non-trivial invariant subsets on $s$).
The construction required the existence of a continuous norm on the underlying space.

In \cite{MR3866905} Q. Menet
was able to show that there is a
big family of Fréchet spaces with a continuous norm that
support an operator without non-trivial invariant subspaces (even subsets).

When the Fréchet space $X$ does not possess a continuous norm, then
there are two possibilities:
\begin{itemize}
  \item There exists a fundamental increasing sequence of seminorms
        $(p_n)_{n\in\N}$ for $X$ such that $\ker p_{n+1}$ is of finite codimension in $\ker p_n$
        for all $n$ (e.g., the space of all sequences $\omega$).
        Then every operator on $X$ has a non-trivial invariant subspace
        (see \cite[Theorem 2.1]{MR3866905}).
  \item No such fundamental system exists.
        In this case it is not clear whether an operator without non-trivial
        invariant subspaces exists -- this case is left open in \cite{MR3866905}.
\end{itemize}

In this paper we construct an operator without non-trivial
invariant subspaces on the space of smooth functions on the real line $C^\infty(\R)$
with the usual topology of uniform convergence of functions and their derivatives on compact sets.
This space does not possess a continuous norm and is isomorphic to
the countable product of the space of rapidly decreasing sequences $s^\N$
which plays an important role in the theory of
nuclear Fréchet spaces because of the celebrated Kōmura-Kōmura theorem.

\section{Preliminaries}
Throughout we will denote by $\N$ the set of non-negative integers.
For us an operator will always be a continuous linear map.
For all unexplained notions from functional analysis we refer to \cite{MR1483073}.
Below we describe the spaces which will be used in our construction.

\subsection{The space \texorpdfstring{$s$}{s} of rapidly decreasing sequences}
The space of rapidly decreasing sequences is the space
\[
    s
    = \left\lbrace
      \left(x_j\right)_{j=0}^\infty:
        p_N\left(\left(x_j\right)_{j=0}^\infty\right) =
         \sum_{j=0}^\infty |x_j|(j+1)^N<\infty
         \quad
         \text{for every $N\in \N$}
    \right\rbrace
\]
with the topology generated by the family of seminorms $\{p_N: N\in \N\}$.
Due to technical reasons we will use a different, although equivalent, system of seminorms for $s$.
The proof of the proposition below is standard (see, e.g.,
\cite[Section 2]{MR3077885}).
\begin{proposition}
\label{prop:matrix}
There exists a matrix $\displaystyle \left(A_{N,j}\right)_{N,j=0}^\infty$ such that:
\begin{enumerate}[leftmargin=2\parindent]
  \item We have $A_{N,j}\geq 1$ for every $N,j \in \N$.
  \item The sequence $\left(A_{N,j}\right)_{N=0}^\infty$ is increasing and unbounded for every $j\in \N$.
  \item The sequence $\left(A_{N,j}\right)_{j=0}^\infty$ is increasing and unbounded for every $N\in \N$.
  \item We have $\frac{A_{N,j+1}}{A_{N,j}}\leq 2$ for every $N,j \in \N$.
  \item We have $\displaystyle\lim_{j\to\infty} \frac{A_{N,j}}{A_{N+1,j}}=0$ for every $N \in \N$.
  \item We have $\displaystyle\lim_{j\to\infty} \frac{2^j}{A_{N,j}}=\infty$ for every $N \in \N$.
  \item The family of seminorms $\{p_N:N\in\N\}$ is equivalent to the family of seminorms $\{|\cdot|_N:N\in\N\}$, where
        \begin{equation*}
          \left|\left(x_j\right)_{j=0}^\infty \right|_N=\sum_{j=0}^\infty|x_j|A_{N,j}.
        \end{equation*}
        Moreover, the unit balls of the seminorms $|\cdot|_N$ form a basis of neighborhoods of zero in $s$.
\end{enumerate}
\end{proposition}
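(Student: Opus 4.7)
The plan is to construct $(A_{N,j})$ as a step-function smoothing of the weights $(j+1)^N$ that defines the standard topology of $s$. The slow-growth condition (iv) is the most restrictive: it forces $A_{N,j}$ to be close to a power of $2$ whose exponent grows by at most one when $j$ advances by one. This suggests setting
\[
A_{N,j} = 2^{\sigma_N + \tau_N(j)},
\]
where $(\sigma_N)_{N\in\N}$ is a strictly increasing sequence of non-negative integers with $\sigma_N \to \infty$, and $\tau_N\colon\N\to\N$ is non-decreasing with $\tau_N(0)=0$ and unit-or-zero increments $\tau_N(j+1)-\tau_N(j)\in\{0,1\}$. Then (i) and (iv) are automatic.

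Next, I would choose $\tau_N$ so that $\tau_N(j)$ is a step-approximation of $N\log_2(j+1)$. This guarantees the polynomial sandwich
\[
c_N (j+1)^{N-1} \le A_{N,j} \le C_N (j+1)^{N+1}
\]
for suitable constants, which immediately yields the equivalence of $\{|\cdot|_N\}$ with $\{p_N\}$ in (vii) (and also that the unit balls form a neighborhood basis). Concretely, I would prescribe the jump times of $\tau_N$ inductively: declare a jump of $\tau_N$ at $j+1$ whenever $\tau_N(j)+1\le N\log_2(j+2)$, otherwise no jump. The condition $\sigma_N\to\infty$ handles (ii) at $j=0$; unboundedness in $j$ (condition (iii)) follows since $\tau_N$ has infinitely many jumps when $N\ge 1$.

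For (v), I would arrange the jump sequences to be nested so that every jump time of $\tau_N$ is also one of $\tau_{N+1}$ and in addition $\tau_{N+1}$ has, in each window $[1,j]$, roughly $\log_2(j+1)$ more jumps than $\tau_N$; this forces $\tau_{N+1}(j)-\tau_N(j)\to\infty$, equivalently $A_{N,j}/A_{N+1,j}\to 0$. Condition (vi) is easiest: since $\tau_N(j)=O(\log j)$ and $\sigma_N$ is fixed, $j-\sigma_N-\tau_N(j)\to\infty$, so $2^j/A_{N,j}\to\infty$.

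The main obstacle is a bookkeeping one: coordinating the sequences $\sigma_N$ and the jump patterns of $\tau_N$ so that (ii), (iii), (v), and (vii) all hold together. Once the inductive construction above is fixed, the checks reduce to routine estimates: the $\le$ direction of (vii) is proved by comparing the majorant $(j+1)^{N+1}$ to the polynomial weight appearing in $p_{N+1}$, while the $\ge$ direction follows by noting that $(j+1)^N\le c_N^{-1}A_{N+1,j}$. The moreover-part about neighborhood bases is then a direct consequence of the equivalence of seminorm systems on a Fréchet space with countable base.
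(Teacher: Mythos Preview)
The paper itself does not prove this proposition: it calls the argument ``standard'' and points to \cite[Section~2]{MR3077885}. So there is nothing in-paper to compare against, and the only question is whether your sketch is complete.

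Your ansatz $A_{N,j}=2^{\sigma_N+\tau_N(j)}$ with $\tau_N(j+1)-\tau_N(j)\in\{0,1\}$ is the right shape: it hard-wires (i), (iv) and (vi), and tying $\tau_N(j)$ to $N\log_2(j+1)$ produces the polynomial sandwich you need for (vii). One genuine gap remains. With your concrete jump rule (``jump at $j+1$ whenever $\tau_N(j)+1\le N\log_2(j+2)$'') the case $N=0$ never jumps, so $\tau_0\equiv 0$ and $A_{0,j}=2^{\sigma_0}$ is constant in $j$. That contradicts (iii), which requires $(A_{0,j})_j$ to be unbounded; you in fact signal the problem by writing ``when $N\ge 1$'' and then never returning to $N=0$. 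The fix is painless: let $\tau_N$ track $(N+1)\log_2(j+1)$ instead of $N\log_2(j+1)$. Every one of your other checks survives with shifted indices (the sandwich becomes $c_N(j+1)^{N}\le A_{N,j}\le C_N(j+1)^{N+2}$, and (vii) goes through by comparison with $p_N$ and $p_{N+2}$).

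One smaller remark: literal nesting of jump \emph{times} is not delivered by your rule and is not needed. What your rule does give (by an easy induction: if $\tau_{N+1}(j)=\tau_N(j)$ and $\tau_N$ jumps, then the jump condition for $\tau_{N+1}$ is a fortiori satisfied) is the pointwise inequality $\tau_{N+1}(j)\ge\tau_N(j)$, which together with $\sigma_{N+1}>\sigma_N$ already yields the monotonicity in (ii). Condition (v) then follows from the two-sided estimate $\tau_M(j)=M\log_2(j+1)+O_M(1)$ that your rule produces for large $j$, without any nesting of jump sets.
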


\begin{remark}
Observe that
in order to prove that a sequence $(x_n)_{n=0}^\infty$ converges to $x$ in $s$ one only needs
to prove that for every $N$ the inequality $|x_n-x|_N\leq 1$ holds for every $n$ big enough.
\end{remark}
\subsection{The space \texorpdfstring{$s^\N$}{sᴺ}}
The countable product of the space of rapidly decreasing sequences is the space
\[
    s^\N
    = \left\lbrace
      \left(x_{i,j}\right)_{i,j=0}^\infty:
        \left(x_{i,j}\right)_{i=0}^\infty \in s
        \quad
        \text{for every $j\in\mathbb{N}$}
    \right\rbrace
\]
with the natural Fréchet space topology of the product space.
This topology is generated by the family of seminorms $\{\|\cdot\|_N:N\in\N\}$, where
\[
  \left\|\left(x_{i,j}\right)_{i,j=0}^\infty\right\|_N
  = \sum_{j=0}^N \left|\left(x_{i,j}\right)_{\vphantom{j}i=0}^\infty \right|_N.
\]
Note that the unit balls of the seminorms $\|\cdot\|_N$ form
a basis of neighborhoods of zero in $s^\N$.

The family of unit vectors $\{e_{n,m}:n,m\in\N\}$, where
\[
  e_{n,m}=\left(\delta_{(n,m)(i,j)}\right)_{i,j=0}^\infty,
\]
is a Schauder basis for $s^\N$ (where $\delta$ is the Kronecker delta).
It follows from the definitions that
\[
  \|e_{n,m}\|_N
  = \begin{cases}
    A_{N,n}, & m\leq N;\\
    0,       & m>N.
  \end{cases}
\]
\subsection{The spaces \texorpdfstring{$s^\N_0$}{sᴺ₀} and \texorpdfstring{$s^\N_{0,0}$}{sᴺ₀₀}}
In our paper two dense linear subspaces of $s^\N$ will play an important role. We define
\[
  s^\N_0
  = \left\lbrace
    \left(x_{i,j}\right)_{i,j=0}^\infty\in s^\mathbb{N}:
      x_{i,j}=0
      \quad
      \text{for $j$ large enough}
  \right\rbrace
\]
and
\[
  s^\N_{0,0}
  = \left\lbrace
    \left(x_{i,j}\right)_{i,j=0}^\infty\in s^\mathbb{N}:
      x_{i,j}=0
      \quad
      \text{for all but finitely many $(i,j)$}
  \right\rbrace.
\]
The formula
\[
  \vertiii{\left(x_{i,j}\right)_{i,j=0}^\infty}_N
  = \sum_{j=0}^\infty
    \left| \left(x_{i,j}\right)_{\vphantom{j}i=0}^\infty \right|_N
\]
defines a norm on $s^\N_0$ and for every
$n,m,N\in N$ we have
\[
  \vertiii{e_{n,m}}_N = A_{N,n}.
\]

\section{The strategy}
\label{sec:stra}
In this section we describe the strategy for the construction of an operator
without non-trivial invariant subspaces on a separable Fréchet space $X$
with a Schauder basis $\{e_n:n\in\N\}$ and with the topology generated
by the fundamental family of seminorms $\{\|\cdot\|_n: n\in\N\}$ (we assume
that the unit balls of those seminorms form a basis of neighborhoods of zero in $X$).
This section is modeled on the last chapter of \cite{MR2533318} where F. Bayart
and \'{E}. Matheron gave a detailed exposition of Read's construction of
 an operator on $\ell_1$ without invariant subspaces.

The idea behind the construction is simple and is based on cyclic vectors.
\begin{definition} Let $T\colon X\to X$ be an operator. We say that
$x\in X$ is a cyclic vector for $T$ if
\[
  \Span \left\lbrace x,Tx,T^2x,\ldots\right\rbrace
  = \{
    P(T)x:
    \text{$P$ is a polynomial}
  \}
\]
is dense in $X$.
\end{definition}
It is a simple observation that an operator $T\colon X\to X$ has no non-trivial
invariant subspaces if and only if every non-zero $x\in X$ is a cyclic vector for $T$.
For some classes of operators the existence of at least one cyclic vector follows directly from the form of the operator.
\begin{definition} We say that an operator $T\colon X\to X$ is a perturbed weighted forward shift if for every $j\in \N$ we have
\[
  Te_j = \sum_{i=0}^{j+1} \lambda_ie_i,
\]
where $\lambda_{j+1}\not=0$.
\end{definition}
It is clear that $e_0$ is a cyclic vector for
any perturbed weighted forward shift.
We observe now that in order to show that any other non-zero vector
is cyclic for $T$ it is enough to prove an inequality.
\begin{proposition}
\label{prop:strategy}
Let $T\colon X\to X$ be an operator for which $e_0$ is a cyclic vector.
A non-zero $x\in X$ is a cyclic vector for
$T$ if and only if for every $N\in\N$ there is a polynomial $P$ such that
\begin{equation}
\label{eq:stra}
  \|P(T)x-e_0\|_N \leq 4.
\end{equation}
\end{proposition}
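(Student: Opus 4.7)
The forward direction is immediate. If $x$ is cyclic, then $\Span\{T^kx : k\in\N\} = \{P(T)x : P\text{ a polynomial}\}$ is dense in $X$, so $e_0$ can be approximated arbitrarily well in any seminorm; in particular there is a polynomial $P$ with $\|P(T)x-e_0\|_N \leq 4$.

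For the converse I would introduce the closed $T$-invariant subspace
\[
  Y := \overline{\Span\{T^kx : k\in\N\}}
\]
and aim to show $Y = X$. Writing $B_N := \{z\in X : \|z\|_N \leq 1\}$ for the unit ball of $\|\cdot\|_N$, the hypothesis rewrites as $e_0 \in Y + 4B_N$ for every $N\in\N$, since each $P_N(T)x$ lies in $Y$.

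The heart of the argument is then to deduce $e_0 \in Y$ from this. This follows from the standard topological fact that a closed subspace $Y$ of a topological vector space equals the intersection $\bigcap_U (Y+U)$ as $U$ runs over any basis of neighborhoods of $0$. Since $\{B_N : N\in\N\}$, and hence $\{4B_N : N\in\N\}$, is such a basis in $X$, one gets $\bigcap_N (Y + 4B_N) = Y$, and thus $e_0 \in Y$. Once $e_0 \in Y$, the closedness and $T$-invariance of $Y$ give $\overline{\Span\{T^ke_0 : k\in\N\}} \subset Y$; but the cyclicity of $e_0$ identifies this closure with $X$, so $Y = X$ and $x$ is cyclic.

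I do not foresee any real obstacle: the proposition is a clean reformulation of cyclicity using the neighborhood-basis structure of $X$, and the constant $4$ plays no role beyond being finite -- any fixed positive constant would do, since rescaling a neighborhood basis still yields a neighborhood basis.
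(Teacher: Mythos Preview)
Your proposal is correct and follows essentially the same route as the paper: show $e_0\in\overline{\Span\{T^kx:k\in\N\}}$ using that $\{4B_N\}$ is a neighborhood basis of $0$ (which holds because $\{B_N\}$ is one and scalar multiples of neighborhoods are neighborhoods), then invoke $T$-invariance of this closure together with the cyclicity of $e_0$. Your write-up is in fact slightly more careful than the paper's, which states the inclusion $\Span\{T^ke_0\}\subset\Span\{T^kx\}$ where the right-hand side should really be the closure.
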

\begin{proof}
If $x$ is a cyclic vector for $T$, then
$\Span\left\lbrace x, Tx, T^2x,\ldots \right\rbrace$
is dense in $X$ and therefore for every $N\in \N$ there is a polynomial $P$ such that
\[
  \|P(T)x-e_0\|_N\leq 4.
\]
To prove the converse, recall that the unit balls of the family of seminorms
$\left\lbrace\|\cdot\|_n:n\in\N\right\rbrace$ form a basis of neighborhoods of zero in $X$.
Therefore the condition in the proposition implies that
\[
  e_0 \in \overline{\Span \left\lbrace x, Tx, T^2x,\ldots \right\rbrace}.
\]
But $e_0$ is a cyclic vector for $T$ and
\[
  \Span \left\lbrace e_0, Te_0, T^2e_0,\ldots \right\rbrace
  \subset
  \Span \left\lbrace x, Tx, T^2x,\ldots \right\rbrace.
\]
Thus $x$ is cyclic for $T$.
\end{proof}

In order to define an operator $T\colon X\to X$ for which \eqref{eq:stra}
is easy to verify for a large family of vectors we will use
the following lemma due to Read, which can be found in a slightly modified
version in the last chapter of \cite{MR2533318} (note that the proof
uses only simple linear algebra and a compactness argument).
\begin{definition}
We say that vectors $\gamma_0,\ldots,\gamma_n$
form a perturbed canonical basis of
$\Span\{e_0,\ldots,e_n\}$ if for every $0\leq j\leq n$
\[
  \gamma_j = \sum_{i=0}^j\mu_ie_i,
\]
where $\mu_j\not=0$.
\end{definition}
\begin{remark}
Let $T\colon X\to X$ be a perturbed weighted forward shift. For every $n\in N$
the vectors $e_0, Te_0,\ldots T^ne_0$ form a perturbed
canonical basis of $\Span\{e_0,\ldots,e_n\}$.
\end{remark}
\begin{lemma}\
\label{lemma:theLemma}
Let $\varepsilon>0$, $c$ and $d$ be positive integers and
$\gamma_0,\ldots, \gamma_{c+d-1}$ be a perturbed canonical basis
of $H=\Span\{e_0,\ldots,e_{c+d-1}\}$
such that
\[
  \gamma_c = \varepsilon e_c+e_0.
\]
Let $\|\cdot\|$ be a norm on $\Span \{e_n:n\in\mathbb{N}\}$ and
$K\subset H$ be a compact set such that
for every $y\in K$ with
\[
  y = \sum_{i=0}^{c+d-1}y_i\gamma_i
\]
there is $0\leq j\leq c-1$ such that $y_j \neq 0$.

Then there is a number $D\geq 1$ such that for every $y\in K$ and every
$T\colon X\to X$ which is a perturbed weighted forward shift such that
\[
  T^je_0=\gamma_j \text{ for } j=0,\ldots, c+d-1
\]
there is a polynomial
\[
  P(t) = \sum_{i=1}^{c+d}c_it^i \text{ with } \sum_{i=1}^{c+d}|c_i|\leq D
\]
for which
\[
  \|P(T)y-e_0\|
  \leq 2\varepsilon \|e_c\|
       + D\times \max_{c+d\leq j\leq 2(c+d-1)}\| T^je_0\|.
\]
\end{lemma}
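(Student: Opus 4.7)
My plan is to construct $P$ so that $P(T)y$ equals $\gamma_c$ modulo a combination of $T^l e_0$ for $c+d \leq l \leq 2(c+d-1)$. Since $\gamma_c = \varepsilon e_c + e_0$, this will yield $\|P(T)y - e_0\| \leq \varepsilon\|e_c\| + D\max_l\|T^le_0\|$ pointwise in $y$; the extra factor of $2$ on $\varepsilon\|e_c\|$ in the claimed bound will arise when I make $D$ uniform over $K$ by a compactness argument.

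For the pointwise construction, fix $y\in K$ and write $y=\sum_{i=0}^{c+d-1}y_i\gamma_i$. Let $j_0 = j_0(y)\in\{0,\ldots,c-1\}$ be the smallest index with $y_{j_0}\neq 0$, which exists by the assumption on $K$. Expanding $P(T)y = \sum_{i,j}c_iy_jT^{i+j}e_0$ and using $T^{i+j}e_0=\gamma_{i+j}$ whenever $i+j\leq c+d-1$, the requirement that the low-order part of $P(T)y$ equal $\gamma_c$ becomes the triangular system
\[
  \sum_{i+j=k,\ i\geq 1} c_iy_j = \delta_{k,c}, \qquad k=1,\ldots,c+d-1.
\]
I solve this recursively: the first $j_0$ equations are vacuous; the equations for $j_0<k<c$ force $c_1=\cdots=c_{c-j_0-1}=0$; the equation for $k=c$ gives $c_{c-j_0}=1/y_{j_0}$; and the equations for $c<k\leq c+d-1$ determine $c_{c-j_0+1},\ldots,c_{c+d-1-j_0}$. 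I set the free coefficients $c_{c+d-j_0},\ldots,c_{c+d}$ to zero, so that the tail only involves $T^le_0$ with $l\leq (c+d-1-j_0)+(c+d-1)\leq 2(c+d-1)$, matching the range in the statement.

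The main obstacle is that $c_{c-j_0}=1/y_{j_0}$ can blow up as $y$ varies, because the function $j_0(y)$ jumps discontinuously on $K$. I handle this by applying the polynomial $P_y$ built at one point $y$ to nearby points $y'=y+\Delta\in K$. A direct expansion gives $P_y(T)y' = \gamma_c + E + B$, where $E = \sum_{k=1}^{c+d-1}\bigl(\sum_{i+j=k}c_i^{(y)}\Delta_j\bigr)\gamma_k$ lies in $H$, and $B$ is a combination of the $T^le_0$ with $l\in[c+d,2(c+d-1)]$. Crucially, $E$ depends only on the fixed data $\gamma_0,\ldots,\gamma_{c+d-1}$ and on $\Delta$, and is therefore \emph{independent of $T$}. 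Hence there is an open neighborhood $V_y\subset K$ of $y$, chosen independently of $T$, on which $\|E\|\leq\varepsilon\|e_c\|$, and on $V_y$ we obtain
\[
  \|P_y(T)y'-e_0\| \leq 2\varepsilon\|e_c\| + D_y\max_{c+d\leq l\leq 2(c+d-1)}\|T^le_0\|,
\]
for a constant $D_y$ depending only on $y$, the $\gamma_i$'s, and $K$. Compactness of $K$ extracts a finite subcover $V_{y_1},\ldots,V_{y_n}$, and taking $D = \max_{1\leq k\leq n}\bigl(1 + \sum_i|c_i^{(y_k)}| + D_{y_k}\bigr)$ yields the required uniform constant.
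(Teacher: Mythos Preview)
The paper does not actually prove this lemma; it cites the last chapter of Bayart--Matheron \cite{MR2533318} and remarks only that ``the proof uses only simple linear algebra and a compactness argument.'' Your proposal carries out exactly that: the triangular system for the $c_i$'s is the linear algebra, and the passage from the pointwise polynomial $P_y$ to a uniform $D$ via a finite subcover of $K$ is the compactness argument. The key technical point---that the low-order discrepancy $E$ when $P_y$ is applied to a nearby $y'$ is a fixed linear combination of the given vectors $\gamma_1,\ldots,\gamma_{c+d-1}$ and hence independent of $T$, so the neighborhood $V_y$ can be chosen before $T$ is---is correctly identified and is what makes the compactness step work. Your argument is correct and is essentially the intended proof.
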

This lemma will be used inductively
to construct an operator for which
\eqref{eq:stra} holds for many vectors from
$\Span\{e_n: n\in\N \}$.
Having an arbitrary vector $x=\sum_{n=0}^\infty x_ne_n$
we will write $x=h+t$ in such a way that
\eqref{eq:stra} holds for $h$ and $P(T)t$ is very small in the corresponding norm.
This in essence is what we will do.
\begin{remark}
In the proofs of Read, Goliński and Menet
the fact that $X$ possesses a continuous norm was crucial in the definition of the compact set $K$ when the above lemma is used. In the case of
$s^\N$ there is no continuous norm and we need to somehow overcome this difficulty.
\end{remark}
\begin{remark}
To realize the above strategy we need to define an order
on the canonical basis of $s^\N$.
This is done in the next section.
\end{remark}

\section{The definitions of the functions \texorpdfstring{$\Next$}{next} and \texorpdfstring{$\pos$}{pos}}
\begin{definition}
\label{def:order}
Let $(b_n)_{n=1}^\infty$ be an increasing sequence of natural numbers such that $2b_n+1<b_{n+1}$ for every $n\in\N$.
\begin{enumerate}[leftmargin=2\parindent]
\item We define the function
      \[
        \Next \colon \N\times\N \to \N\times\N
      \]
      by the formula (see also Figure \ref{fig:one}):
      \begin{equation}
      \label{def:next}
        \Next(i,j)=
        \begin{cases}
          (i,j+1),& i = b_n,\,    2 \mid j,\,      j < 2n;\\
          (i,j+1),& i = 2b_n+1,\, 2 \nmid j,\, 0 < j \leq 2n;\\
          (i,j+1),& i = 0,\,      2\nmid j ;\\
          (i,j-1),& i = b_n+1,\,  2\nmid j,\,      j < 2n;\\
          (i,j-1),& i = 2b_n,\,   2\mid j,\,   0 < j \leq 2n;\\
          \text{\parbox{\widthof{remaining cases}}{in the\\ remaining cases}}&
            \begin{cases}
              (i+1,j),& 2\mid j;\\
              (i-1,j),& 2\nmid j.\\
            \end{cases}
        \end{cases}
      \end{equation}
\item We define $\Next^0$ to be the identity on $\N\times\N$ and for $k\geq 1$ by
      $\Next^k$ we denote the composition of $\Next$ with itself $k$ times.
\item We define the function
      \[
        \pos \colon \N\times\N \to \N
      \]
      by the rule:
      \[
       \pos(n,m) = k
       \quad \text{if and only if} \quad
       \Next^k(0,0)=(n,m).
      \]
\item We define the sequence $(e_k)_{k=0}^\infty$ by the rule
      \[
        e_k = e_{n,m}
        \quad \text{if and only if} \quad
        \pos(n,m)=k.
      \]
\end{enumerate}
\end{definition}
\begin{remark}
  The function $\Next$ defines an ordering of $\N\times\N$ which is illustrated in Figure \ref{fig:one}.
  One should assume that the sequence $(b_n)_{n=0}^\infty$ increases very rapidly, so the true image would be very elongated.
\end{remark}

\begin{figure}[ht]
  \begin{minipage}{.5\textwidth}
  \centering
  \begin{tikzpicture}
    \coordinate (qx) at (0.5, 0);
    \coordinate (qy) at (0, -0.2);
    \coordinate (x0) at (0,0);
    \coordinate (x1) at ($1*(qx)$);
    \coordinate (x2) at ($2*(qx)$);
    \coordinate (x3) at ($3*(qx)$);
    \coordinate (x4) at ($4*(qx)$);
    \coordinate (x5) at ($5*(qx)$);
    \coordinate (x6) at ($6*(qx)$);
    \coordinate (x7) at ($7*(qx)$);
    \coordinate (x8) at ($8*(qx)$);
    \coordinate  (b0)  at (0, 0);

    \coordinate  (b1)  at (0, -0.6);
    \coordinate  (b1q) at ($(b1)+(qy)$);
    \coordinate (2b1)  at ($2*(b1)$);
    \coordinate (2b1q) at ($(2b1)+(qy)$);

    \coordinate  (b2)  at (0, -2);
    \coordinate  (b2q) at ($(b2)+(qy)$);
    \coordinate (2b2)  at ($2*(b2)$);
    \coordinate (2b2q) at ($(2b2)+(qy)$);

    \coordinate  (b3)  at (0, -5);
    \coordinate  (b3q) at ($(b3)+(qy)$);
    \coordinate (2b3)  at ($2*(b3)$);
    \coordinate (2b3q) at ($(2b3)+(qy)$);

    \draw[red] (0,0)
    |- (b1 -| x1)

    |- (b0 -| x2)
    |- (2b1 -| x1)
    |- (b1q -| x0)

    |- (b2 -| x1)
    |- (2b1q -| x2)
    |- (b2 -| x3)
    |- (b0 -| x4)
    |- (2b2 -| x3)
    |- (b2q -| x2)
    |- (2b2 -| x1)
    |- (b2q -| x0)

    |- (b3 -| x1)
    |- (2b2q -| x2)
    |- (b3 -| x3)
    |- (2b2q -| x4)
    |- (b3 -| x5)
    |- (b0 -| x6)
    |- (2b3 -| x5)
    |- (b3q -| x4)
    |- (2b3 -| x3)
    |- (b3q -| x2)
    |- (2b3 -| x1)
    |- (b3q -| x0)
    |- (2b3q);

    \foreach \x in {0,...,6}
    \foreach \y in {0,...,50}
    {
        \fill (0.5*\x,-0.2*\y) circle (1pt);
    }

    \node[left] at (b1) {$b_1$};
    \node[left] at (b2) {$b_2$};
    \node[left] at (b3) {$b_3$};

    \node[left] at (2b1) {$2b_1$};
    \node[left] at (2b2) {$2b_2$};
    \node[left] at (2b3) {$2b_3$};

    \node[above] at (x0) {$0$};
    \node[above] at (x1) {$1$};
    \node[above] at (x2) {$2$};
    \node[above] at (x3) {$3$};
    \node[above] at (x4) {$4$};
    \node[above] at (x5) {$5$};
    \node[above] at (x6) {$6$};
  \end{tikzpicture}
  \caption{Ordering defined by $\Next$}
  \label{fig:one}
  \end{minipage}%
  \begin{minipage}{.5\textwidth}
  \centering
  \begin{tikzpicture}
    \coordinate (qx) at (0.5, 0);
    \coordinate (qy) at (0, -0.2);
    \coordinate (x0) at (0,0);
    \coordinate (x1) at ($1*(qx)$);
    \coordinate (x2) at ($2*(qx)$);
    \coordinate (x3) at ($3*(qx)$);
    \coordinate (x4) at ($4*(qx)$);
    \coordinate (x5) at ($5*(qx)$);
    \coordinate (x6) at ($6*(qx)$);
    \coordinate (x7) at ($7*(qx)$);
    \coordinate (x8) at ($8*(qx)$);
    \coordinate (x9) at ($9*(qx)$);
    \coordinate (x10) at ($10*(qx)$);
    \coordinate  (b0)  at (0, 0);

    \coordinate  (b1)  at (0, -0.6);
    \coordinate  (b1q) at ($(b1)+(qy)$);
    \coordinate (2b1)  at ($2*(b1)$);
    \coordinate (2b1q) at ($(2b1)+(qy)$);

    \coordinate  (b2)  at (0, -1);
    \coordinate  (b2q) at ($(b2)+(qy)$);
    \coordinate (2b2)  at ($2*(b2)$);
    \coordinate (2b2q) at ($(2b2)+(qy)$);

    \coordinate  (D3)  at (0, -2.5);
    \coordinate  (b3)  at (0, -3.25);
    \coordinate  (b3q) at ($(b3)+(qy)$);
    \coordinate (2b3)  at ($2*(b3)$);
    \coordinate (2b3q) at ($(2b3)+(qy)$);

    \coordinate  (s3)  at (0, -7.5);
    \coordinate  (a3)  at (0, -9);
    \coordinate  (D4)  at (0, -10);

    \coordinate (x82) at ($0.5*(x8)$);
    \coordinate (b22) at ($0.5*(b2)$);

    \draw[pattern=north west lines] (0,0) -- (x8) -- (b2 -| x8) -- (b2) -- cycle;
    \node[fill=white, inner sep=1pt] at (x82 |- b22) {previous intervals};

    \draw (b2) 
    -- (2b2);
    \draw[red] (2b2)
    |- (b3 -| x1)
    |- (b2q -| x2)
    |- (b3 -| x3)
    |- (b2q -| x4)
    |- (b3 -| x5)
    |- (b2q -| x6)
    |- (b3 -| x7)
    |- (b2q -| x8)
    |- (b3 -| x9)
    |- (b0 -| x10)
    |- (2b3 -| x9)
    |- (b3q -| x8)
    |- (2b3 -| x7)
    |- (b3q -| x6)
    |- (2b3 -| x5)
    |- (b3q -| x4)
    |- (2b3 -| x3)
    |- (b3q -| x2)
    |- (2b3 -| x1)
    |- (b3q -| x0)
    -- (s3)
    -- (a3)
    -- (D4);

    \draw (D4)
    -- (0,-10.2);

    \node[left] at (b3) {$b_n$};
    \draw ($(b3)-(0.1,0)$) -- ($(b3)+(0.1,0)$);
  
    \node[left] at (2b2) {$\Delta_n$};
    \draw ($(2b2)-(0.1,0)$) -- ($(2b2)+(0.1,0)$);
    \node[left] at (2b3) {$2b_n$};
    \draw ($(2b3)-(0.1,0)$) -- ($(2b3)+(0.1,0)$);

    \node[left] at (s3) {$s_n$};
    \draw ($(s3)-(0.1,0)$) -- ($(s3)+(0.1,0)$);

    \node[left] at (a3) {$a_n$};
    \draw ($(a3)-(0.1,0)$) -- ($(a3)+(0.1,0)$);

    \node[left] at (D4) {$\Delta_{n+1}$};
    \draw ($(D4)-(0.1,0)$) -- ($(D4)+(0.1,0)$);

    \node[above] at (x0) {$0$};
    \node[above] at (x1) {$1$};
    \node[above] at (x2) {$2$};
    \node[above] at (x3) {$\ldots$};
    \node[above] at (x4) {};
    \node[above] at (x5) {};
    \node[above] at (x6) {};
    \node[above] at (x8) {$2n-2$};
    \node[above] at (x10) {$2n$};
  \end{tikzpicture}
  \caption{A single step of the inductive procedure}
  \label{fig:two}
  \end{minipage}%
\end{figure}
\begin{remark}
  It is clear that the elements of the sequence $(e_n)_{n=0}^\infty$ form a Schauder basis of $s^\N$.
  It is also easy to see that
  \[
    e_{\pos(n,m)} = e_{n,m}.
  \]
\end{remark}

\section{Construction of the operator}
We fix a sequence
\begin{equation*}
  (N_n)_{n\in\N } = (0, 1, 0, 1, 2, 0, 1, 2, 3, 0, 1, 2, 3, 4, \ldots).
\end{equation*}
Two properties of this sequence will be important for us: every non-negative integer appears in this sequence infinitely many times and
\begin{equation}
\label{eq:N_n}
  N_n \leq n \quad \text{for every $n\in N$}.
\end{equation}
Assume that we are given integer sequences $(a_n)_{n=0}^\infty$, $(b_n)_{n=1}^\infty$, $(s_n)_{n=1}^\infty$ satisfying
\begin{equation}
  \begin{aligned}
    \label{eq:monotonicity}
    1 & = \Delta_0 \phantom{\mbox{} < b_1 < 2b_1 < s_1}\mbox{} < a_0 < a_0 + \pos(\Delta_0, 0) \\
      & = \Delta_1 < b_1 < 2b_1 < s_1  < a_1    < a_1 + \pos(\Delta_1, 0) \\
      & = \Delta_2 < b_2 < 2b_2 < s_2  < a_2    < a_2 + \pos(\Delta_2, 0) \\
      & = \Delta_3 \ldots,
  \end{aligned}
\end{equation}
where the function $\pos$ is derived from the sequence $(b_n)_{n=1}^\infty$ as described in the previous section.

For further reference, note that
from the definition of $\Next$ and $\pos$ it follows that
\begin{equation}
\label{eq:pos}
  \pos(\Delta_{n+1},0) = \pos(a_n,0) + \pos(\Delta_n,0).
\end{equation}
Moreover, assume we are given a sequence $(D_n)_{n=0}^\infty$ of non-zero numbers. We define a linear operator
\[
  T\colon s^\N_{0,0}\to s^\N_{0,0}
\]
by the formula
\begin{equation}
\label{eq:operator}
  T^je_0
  = \begin{cases}
    \alpha_j e_j,                                  & j\in \left[\pos(\Delta_n,0),\pos(a_n,0)\right);\\
    \frac{1}{A_{N_n,a_n}}e_j+T^{j-\pos(a_n,0)}e_0, & j\in \left[\pos(a_n,0),\pos(\Delta_{n+1},0)\right);
  \end{cases}
\end{equation}
where
\begin{equation}
\label{eq:alphas}
  \alpha_j
  = \begin{cases}
    \frac{1}{A_{N_0,a_0}} \cdot 2^{j-\pos(\Delta_0,0)},                                & j\in [\pos(\Delta_0, 0),\pos(a_0,0)); \\
    \frac{1}{A_{N_n,a_n}} \cdot \left(\frac{1}{2D_{n-1}}\right)^{j- \pos(\Delta_n,0)}, & j\in [\pos(\Delta_n,0),\pos(s_n,0)), n \geq 1;\\
    \alpha_{\pos(s_n,0)-1}\cdot 2^{j-\pos(s_n,0)},                                     & j\in [\pos(s_n, 0),\pos(a_n,0)), n \geq 1.
  \end{cases}
\end{equation}
\begin{remark}
Observe that because the function $\Next$ (defined with $(b_n)_{n=1}^\infty$ by \eqref{def:next}) gives us a strict ordering of $\N\times\N$ and all the numbers $\alpha_j$ in \eqref{eq:operator} are non-zero, one can easily deduce that \eqref{eq:operator} really defines a linear operator on $s^\N_{0,0}$ (we calculate the values of $Te_j$ in Section 7).
\end{remark}

\section{Choosing the parameters}
\label{sec:parameters}
The sequences $(a_n)_{n=0}^\infty$, $(b_n)_{n=1}^\infty$, $(s_n)_{n=1}^\infty$ that were used in the previous section will be carefully chosen in an inductive procedure carried out over the intervals $[\Delta_n, \Delta_{n+1})$ described in this section. Along the way we will also fix the corresponding sequence $(D_n)_{n=0}^\infty$.

The procedure on the first interval starting in $\Delta_0$ is very similar to the general step, but simplified, as there is no need for $b_0$ and $s_0$. Hence we will go over the first step of the induction as well as the general step at the same time.

Assume that $(a_k)_{k=0}^{n-1}$, $(b_k)_{k=1}^{n-1}$, $(s_k)_{k=1}^{n-1}$ and $(D_k)_{k=0}^{n-1}$ have been already fixed. Hence the values of $T^je_0$ are defined by \eqref{eq:operator} for $j$ up to (but not including) $\pos(a_{n-1},0)+\pos(\Delta_{n-1},0)$. We define $\Delta_n=a_{n-1}+\pos(\Delta_{n-1},0)$ in accordance with \eqref{eq:monotonicity} (with $\Delta_0=1$).

If $n \geq 1$, then we choose $b_n$ to be an integer such that $b_n \geq 2 \pos(\Delta_n,0)$. Hence:
\begin{equation}
  \label{cond:pos_bn}
  2(\pos(\Delta_n,0)-1)\leq b_n \leq \pos(b_n,0).
\end{equation}
We also require that for $k\geq b_n$ we have that
\begin{equation}
  \label{cond:2bn}
  4^{\pos(\Delta_n,0)} A_{N_{n-1}+1,k}
  \leq
  \frac{A_{N_{n-1}+2,k}}{D_{n-1}}.
\end{equation}
This is possible as $\frac{A_{N_{n-1}+2,k}}{A_{N_{n-1}+1,k}}$ tends to $+\infty$ (see Proposition \ref{prop:matrix}).

In $(b_n,0)$ the order defined by $\Next$ takes us to other columns than the first (see Figure \ref{fig:two}). The number of iterates we need to return to ($b_{n+1,0}$) is determined by $\{b_1, b_2, \ldots, b_n\}$ only, hence we can calculate $\pos(x,0)$ for $x \leq b_{n+1}$, even though $b_{n+1}$ is not yet fixed and will be chosen in the next step. We choose $s_n$ to be an integer bigger than $2b_n$. The number $b_0$ is in fact not used in the definition of $\Next$, for simplicity we may choose $b_0=s_0=\Delta_0=1$.

We can now choose $a_n$ to be an integer greater than $s_n$ satisfying:
\begin{equation}
\label{eq:cond1}
  \frac{2^{\pos(a_n,0)-\pos(s_n,0)-1}}{A_{N_n,a_n} \left( 2 D_{n-1} \right)^{\pos(s_n,0)-\pos(\Delta_n,0)-1}} \geq 1;
\end{equation}
\begin{equation}
\label{eq:cond3}
  A_{N_n,a_n} 4^{\pos(\Delta_n,0)}A_{0,0}
  \leq
  A_{N_n+1,a_n}.
\end{equation}
This is possible by Proposition \ref{prop:matrix}. For $n \geq 1$ we also require that:
\begin{equation}
\label{eq:cond2}
  \frac{A_{N_{n-1},a_{n-1}}}{A_{N_{n},a_{n}}} \leq 1;
\end{equation}

\begin{equation}
\label{eq:cond4}
  \frac{D_{n-1}A_{N_{n-1},b_{n}}}{A_{N_{n}},a_{n}} \leq 1.
\end{equation}
Now the equation \eqref{eq:alphas} gives as the values for $\alpha_j$ for $j \in [\pos(\Delta_n,0), \pos(a_n,0))$. Hence the values of $T^j e_0$ are defined for $j < \pos(a_n,0) + \pos(\Delta_n,0)$ by \eqref{eq:operator}. We put $\Delta_{n+1} = a_n + \pos(\Delta_n,0)$. For further reference note that \eqref{eq:cond1} gives us immediately

\begin{equation}
  \label{eq:alpha_a_n}
  \alpha_{\pos(a_n,0)-1} \geq 1.
\end{equation}

To carry out the construction on the next interval we still need a suitable number $D_n$. It will be an outcome of applying Lemma \ref{lemma:theLemma}.

Let
\begin{equation}
\label{eq:heads}
  H_n
  = \Span\left\lbrace e_j:
    j\leq \pos(\Delta_{n+1},0)-1
  \right\rbrace.
\end{equation}
Then $\{T^je_0: j\leq \pos(\Delta_{n+1},0)-1\}$ is a perturbed canonical basis of $H_n$.
We define a linear projection $\tau_n\colon H_n\to H_n$ by the formula
\begin{equation}
\label{eq:tau_n}
  \tau_n\left(\sum_{j=0}^{\pos(\Delta_{n+1},0)-1} x_j T^je_0\right)
  = \sum_{j=0}^{\pos(a_n,0)-1} x_jT^je_0.
\end{equation}
Using this projection we define a compact set
\begin{equation}
\label{eq:compacts}
  K_n
  = \left\lbrace
    y\in H_n: \vertiii{y}_0\leq 1 \textrm{ and }
    \vertiii{\tau_n(y)}_0\geq \frac{1}{2}
  \right\rbrace.
\end{equation}

We now apply Lemma \ref{lemma:theLemma} with
$H=H_n$, $K=K_n$, $\|\cdot\|=\vertiii{\cdot}_{N_n}$, $c=\pos(a_n,0)$ and $d=\pos(\Delta_n,0)$. From this lemma we get the necessary number $D_n$ and we can carry out the procedure on the next interval.

Applying Lemma \ref{lemma:theLemma} in the construction gives us the following:
\begin{proposition}
  \label{prop:heads}
  If $y\in K_n$, then there is a polynomial
  \begin{equation}
  \label{eq:polynomial}
    P(t)=\sum_{i=1}^{\pos(\Delta_{n+1},0)}c_it^i
    \text{ with }
    \sum_{i=1}^{\pos(\Delta_{n+1},0)}|c_i|\leq D_n
  \end{equation}
  for which
  \[
    \vertiii{P(T)y-e_0}_{N_n} \leq 3.
  \]
\end{proposition}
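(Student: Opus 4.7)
The plan is to apply Lemma~\ref{lemma:theLemma} directly to the data manufactured in Section~\ref{sec:parameters} and to check that the resulting bound collapses to~$3$.

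Match the parameters as follows: $H=H_n$, $K=K_n$, $\|\cdot\|=\vertiii{\cdot}_{N_n}$, $c=\pos(a_n,0)$, $d=\pos(\Delta_n,0)$ (so $c+d-1=\pos(\Delta_{n+1},0)-1$ by~\eqref{eq:pos}), $\gamma_j=T^je_0$, and $\varepsilon=1/A_{N_n,a_n}$. The three hypotheses of the lemma then need to be checked: (i) $\{T^je_0\}_{j=0}^{c+d-1}$ is a perturbed canonical basis of $H_n$, which is the remark made right after the definition, since \eqref{eq:operator} turns $T$ into a perturbed weighted forward shift on the relevant span; (ii) $\gamma_c=\varepsilon e_c+e_0$, which is exactly the second case of~\eqref{eq:operator} evaluated at $j=\pos(a_n,0)$; (iii) for every $y\in K_n$, at least one coefficient $y_j$ with $0\leq j\leq c-1$ is non-zero, which is immediate from $\vertiii{\tau_n(y)}_0\geq 1/2>0$.

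The lemma then supplies a number $D_n\geq 1$---which we adopt as the $n$-th term of the sequence $(D_n)$---and, for each $y\in K_n$, a polynomial $P$ of the exact form~\eqref{eq:polynomial} satisfying
\[
  \vertiii{P(T)y-e_0}_{N_n}
  \leq 2\varepsilon\vertiii{e_c}_{N_n}
       + D_n\max_{c+d\leq j\leq 2(c+d-1)}\vertiii{T^je_0}_{N_n}.
\]
Since $e_c=e_{a_n,0}$ and $\vertiii{e_{n,m}}_N=A_{N,n}$, the first summand equals exactly $2\cdot\frac{1}{A_{N_n,a_n}}\cdot A_{N_n,a_n}=2$. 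Hence it suffices to prove that the second summand is at most~$1$.

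This last estimate is the technical heart of the proposition, and the main obstacle. A crucial preliminary observation is that, because $b_{n+1}\geq 2\pos(\Delta_{n+1},0)$ by~\eqref{cond:pos_bn} and $s_{n+1}>2b_{n+1}$, we have $\pos(s_{n+1},0)\geq s_{n+1}>4\pos(\Delta_{n+1},0)>2\pos(\Delta_{n+1},0)-2$, so every $j$ in the range of interest lies in $[\pos(\Delta_{n+1},0),\pos(s_{n+1},0))$, where the first clause of~\eqref{eq:alphas} gives $T^je_0=\alpha_je_j$ with $\alpha_j=A_{N_{n+1},a_{n+1}}^{-1}(2D_n)^{-(j-\pos(\Delta_{n+1},0))}$. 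Writing $e_j=e_{i_j,m_j}$, the $\Next$-ordering on this stretch confines the row~$i_j$ to $[\Delta_{n+1},2b_{n+1}]$, so $\vertiii{T^je_0}_{N_n}=A_{N_n,i_j}\alpha_j\leq A_{N_n,2b_{n+1}}\alpha_j$; using property~(4) of Proposition~\ref{prop:matrix} to bound $A_{N_n,2b_{n+1}}$ by $2^{b_{n+1}}A_{N_n,b_{n+1}}$, the geometric decay $(2D_n)^{-(j-\pos(\Delta_{n+1},0))}$ absorbs this row growth, and the decisive ingredient yielding $D_n\vertiii{T^je_0}_{N_n}\leq 1$ is condition~\eqref{eq:cond4} applied at index $n+1$, namely $D_nA_{N_n,b_{n+1}}/A_{N_{n+1},a_{n+1}}\leq 1$. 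I expect the actual labour of the proof to lie in making these estimates uniform across the range and cleanly extracting the final bound~$\leq 1$, but no new idea beyond Lemma~\ref{lemma:theLemma} and the carefully tuned constraints of Section~\ref{sec:parameters} should be required.
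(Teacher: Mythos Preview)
Your setup and the verification of the hypotheses of Lemma~\ref{lemma:theLemma} are exactly what the paper does, and the computation of the first summand as~$2$ is correct. The approach to the second summand is also the paper's: observe that the relevant $j$ lie in the decreasing-$\alpha$ range, so $T^je_0=\alpha_je_j$, and then finish with condition~\eqref{eq:cond4} at index~$n+1$.

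There is, however, a genuine slip in your row estimate. You bound the row $i_j$ uniformly by $2b_{n+1}$, pass to $A_{N_n,2b_{n+1}}\leq 2^{b_{n+1}}A_{N_n,b_{n+1}}$, and then claim the decay $(2D_n)^{-(j-\pos(\Delta_{n+1},0))}$ absorbs the factor $2^{b_{n+1}}$. But at the endpoint $j=\pos(\Delta_{n+1},0)$ that decay factor equals $1$, and you are left with
\[
  D_n\cdot\frac{2^{b_{n+1}}A_{N_n,b_{n+1}}}{A_{N_{n+1},a_{n+1}}},
\]
which \eqref{eq:cond4} does \emph{not} control; $b_{n+1}$ may be chosen enormous compared to everything fixed so far, so the extra $2^{b_{n+1}}$ genuinely breaks the bound. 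The fix is simply to sharpen the row estimate: by \eqref{cond:pos_bn} at index $n+1$ one has $2(\pos(\Delta_{n+1},0)-1)\leq b_{n+1}\leq\pos(b_{n+1},0)$, so the whole $j$-range lies in $[\pos(\Delta_{n+1},0),\pos(b_{n+1},0)]$, where the $\Next$-path is still going straight down column~$0$. Hence $i_j\leq b_{n+1}$ (not $2b_{n+1}$), giving directly $\vertiii{e_j}_{N_n}\leq A_{N_n,b_{n+1}}$. Combining this with $\alpha_j\leq\alpha_{\pos(\Delta_{n+1},0)}=1/A_{N_{n+1},a_{n+1}}$ yields
\[
  D_n\max_j\vertiii{T^je_0}_{N_n}\leq \frac{D_nA_{N_n,b_{n+1}}}{A_{N_{n+1},a_{n+1}}}\leq 1
\]
by~\eqref{eq:cond4}, exactly as in the paper. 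No absorption argument is needed.
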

\begin{proof}
Let $y\in K_n$.
Lemma \ref{lemma:theLemma} gives us the existence of a polynomial $P$
satisfying \eqref{eq:polynomial} such that
\begin{align*}
  \vertiii{P(T)y-e_0}_{N_n}
  & \leq
    \frac{2}{A_{N_n,a_n}} \vertiii{e_{\pos(a_n,0)}}_{N_n}
    + D_n \times \max_{\pos(\Delta_{n+1},0)\leq j\leq 2(\pos(\Delta_{n+1},0)-1)}
                 \vertiii{T^je_0}_{N_n}\\
  & \overset{\mathclap{\eqref{eq:operator}}}{=}
    2 + D_n \times \max_{\pos(\Delta_{n+1},0)\leq j\leq 2(\pos(\Delta_{n+1},0)-1)}
            \vertiii{\alpha_je_j}_{N_n}\\
  & \overset{\mathclap{\eqref{eq:alphas}}}{\leq}
    2 + D_n \alpha_{\pos(\Delta_{n+1},0)} \vertiii{e_{2\pos(\Delta_{n+1},0)}}_{N_n}\\
  &\overset{\mathclap{\eqref{eq:alphas}}}{\leq}
    2 + \frac{D_n}{A_{N_{n+1}},a_{n+1}} \vertiii{e_{\pos(b_{n+1},0)}}_{N_n}\\
  & =
    2 + \frac{D_nA_{N_{n},b_{n+1}}}{A_{N_{n+1}},a_{n+1}}
  \overset{\mathclap{\eqref{eq:cond4}}}{\leq} 3.\qedhere
\end{align*}
\end{proof}

\begin{corollary}
\label{cor:heads}
If $\displaystyle y\in \bigcup_{m=1}^\infty mK_n$,
then there is a polynomial
\begin{equation*}
  P(t) = \sum_{i=1}^{\pos(\Delta_{n+1},0)} c_it^i
  \text{ with }
  \sum_{i=1}^{\pos(\Delta_{n+1},0)}|c_i| \leq D_n
\end{equation*}
for which
\[
  \vertiii{P(T)y-e_0}_{N_n}\leq 3.
\]
\end{corollary}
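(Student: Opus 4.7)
The plan is a direct scaling argument off of Proposition \ref{prop:heads}. Any $y\in\bigcup_{m=1}^\infty mK_n$ can be written as $y=mz$ with $z\in K_n$ and $m\in\N$, $m\geq 1$. Proposition \ref{prop:heads} supplies a polynomial
\[
  Q(t)=\sum_{i=1}^{\pos(\Delta_{n+1},0)} c_i' t^i
  \quad\text{with}\quad
  \sum_{i=1}^{\pos(\Delta_{n+1},0)}|c_i'|\leq D_n
\]
such that $\vertiii{Q(T)z-e_0}_{N_n}\leq 3$.

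The key observation is that we are free to rescale the polynomial rather than the target vector $e_0$. So I would set $P(t)=Q(t)/m$, i.e., put $c_i=c_i'/m$. Then by linearity
\[
  P(T)y=\frac{1}{m}Q(T)(mz)=Q(T)z,
\]
so $\vertiii{P(T)y-e_0}_{N_n}\leq 3$ is inherited immediately. The coefficient bound is also preserved because
\[
  \sum_{i=1}^{\pos(\Delta_{n+1},0)}|c_i|
  =\frac{1}{m}\sum_{i=1}^{\pos(\Delta_{n+1},0)}|c_i'|
  \leq \frac{D_n}{m}\leq D_n,
\]
using $m\geq 1$. This yields the corollary.

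There is no real obstacle here; the whole content of the corollary is the homogeneity of the approximation scheme, and the subtle point (if any) is simply to notice that dividing $Q$ by $m$ shrinks the $\ell^1$-norm of its coefficients rather than enlarging it, so the bound $D_n$ from the proposition transfers unchanged.
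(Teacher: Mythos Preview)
Your proof is correct and essentially identical to the paper's: both rescale the polynomial obtained from Proposition~\ref{prop:heads} by the factor $1/m$, noting that this preserves the estimate $\vertiii{P(T)y-e_0}_{N_n}\leq 3$ and can only decrease the $\ell^1$-norm of the coefficients.
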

\begin{proof} By the assumptions there is $m\geq 1$ such that
$
  y' = \frac{1}{m} y\in K_n.
$
By Proposition \ref{prop:heads}
there is a polynomial
\begin{equation*}
  P(t) = \sum_{i=1}^{\pos(\Delta_{n+1},0)}c_it^i
  \text{ with }
  \sum_{i=1}^{\pos(\Delta_{n+1},0)}|c_i| \leq D_n
\end{equation*}
for which
\[
  \vertiii{P(T)y'-e_0}_{N_n}\leq 3.
\]
To finish the proof observe that the polynomial
$P' = \frac{1}{m}P$
has all the requested properties.
\end{proof}

\section{The values \texorpdfstring{$Te_j$}{Teⱼ}}
To prove that the operator $T$ has certain properties we need to calculate the exact values of $Te_j$.

\begin{proposition}
\label{prop:values}
The operator $T\colon s_{0,0}^\N\to s_{0,0}^\N$
defined by \eqref{eq:operator} satisfies the equality:
\[
  Te_j
  = \begin{cases}
    \alpha_1 e_1,                         & j = 0;\\
    \frac{\alpha_{j+1}}{\alpha_j}e_{j+1}, & j \in \left[\pos(\Delta_n,0), \pos(a_n,0)-1\right);\\
    \frac{1}{\alpha_{\pos(a_n,0)-1}}
      \left(\frac{1}{A_{N_n,a_n}} e_{\pos(a_n,0)} +e_0\right),
                                          & j = \pos(a_n,0)-1;\\
     e_{j+1},                             & j \in \left[\pos(a_n,0),\pos(\Delta_{n+1},0)-1\right);\\
     A_{N_n,a_n}\alpha_{\pos(\Delta_{n+1},0)} e_{\pos(\Delta_{n+1},0)}
       - A_{N_n,a_n}\alpha_{\pos(\Delta_n,0)} e_{\pos(\Delta_n,0)},
                                          & j = \pos(\Delta_{n+1}, 0)-1.
  \end{cases}
\]
Thus $T$ is a perturbed weighted forward shift.
\end{proposition}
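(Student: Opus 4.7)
The plan is to invert the defining relations in \eqref{eq:operator} to express each basis vector $e_j$ as a linear combination of $\{T^k e_0 : k \le j\}$, then apply $T$ to both sides and re-express the result on the basis $\{e_k\}$. The case split in the statement mirrors exactly which interval $[\pos(\Delta_n,0),\pos(a_n,0))$ or $[\pos(a_n,0),\pos(\Delta_{n+1},0))$ the indices $j$ and $j+1$ belong to, so the proof naturally breaks into five cases driven by that bookkeeping.

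First I would dispatch the easy ``interior'' cases. If $j \in [\pos(\Delta_n,0),\pos(a_n,0)-1)$, then both $j$ and $j+1$ lie in the first branch of \eqref{eq:operator}, so $e_j = \alpha_j^{-1} T^j e_0$ and $T e_j = \alpha_j^{-1} T^{j+1} e_0 = (\alpha_{j+1}/\alpha_j) e_{j+1}$; the case $j=0$ is handled by the special initial value $T^1 e_0 = \alpha_1 e_1$. Symmetrically, if $j \in [\pos(a_n,0),\pos(\Delta_{n+1},0)-1)$, then both $j$ and $j+1$ lie in the second branch, so inverting gives $e_j = A_{N_n,a_n}(T^j e_0 - T^{j-\pos(a_n,0)} e_0)$; applying $T$ and substituting the second branch formula at $j+1$ yields a cancellation that leaves $T e_j = e_{j+1}$.

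The two remaining cases are the boundary ones. For $j = \pos(a_n,0)-1$, we have $e_j = \alpha_j^{-1} T^j e_0$ from the first branch, while $T^{j+1} e_0$ is computed from the second branch at the index $\pos(a_n,0)$: this gives exactly $T e_j = \alpha_j^{-1}(A_{N_n,a_n}^{-1} e_{\pos(a_n,0)} + e_0)$, since the recursive term $T^{(j+1)-\pos(a_n,0)} e_0 = T^0 e_0 = e_0$. For $j = \pos(\Delta_{n+1},0)-1$, we use the second-branch inversion for $e_j$ and then compute $T e_j = A_{N_n,a_n}(T^{j+1} e_0 - T^{j+1-\pos(a_n,0)} e_0)$; here the decisive fact is the identity \eqref{eq:pos}, which forces $j+1-\pos(a_n,0) = \pos(\Delta_n,0)$. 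Both $T^{j+1} e_0$ and $T^{\pos(\Delta_n,0)} e_0$ now fall in the first branch, producing the claimed combination $A_{N_n,a_n}\alpha_{\pos(\Delta_{n+1},0)} e_{\pos(\Delta_{n+1},0)} - A_{N_n,a_n}\alpha_{\pos(\Delta_n,0)} e_{\pos(\Delta_n,0)}$.

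To conclude that $T$ is a perturbed weighted forward shift, I would inspect the coefficient of $e_{j+1}$ in each of the five formulas and observe that it is nonzero: $\alpha_{j+1}/\alpha_j$, $1/(\alpha_{\pos(a_n,0)-1} A_{N_n,a_n})$, $1$, and $A_{N_n,a_n}\alpha_{\pos(\Delta_{n+1},0)}$ respectively, all nonzero by \eqref{eq:alphas} and the positivity of $A_{N_n,a_n}$. All other nonzero terms involve indices strictly less than $j+1$, so the required form $T e_j = \sum_{i=0}^{j+1}\lambda_i e_i$ with $\lambda_{j+1}\ne 0$ is met. There is no real obstacle here beyond careful bookkeeping at the two branch transitions, and the only nontrivial ingredient is the interval-arithmetic identity \eqref{eq:pos}, which is precisely what makes the fifth case collapse onto the first branch.
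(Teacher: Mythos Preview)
Your proposal is correct and follows essentially the same approach as the paper: invert \eqref{eq:operator} to express $e_j$ in terms of the orbit $\{T^k e_0\}$, apply $T$, and re-express via \eqref{eq:operator}, with the five cases determined by which branch $j$ and $j+1$ fall into and with \eqref{eq:pos} providing the index identity in the last case. The paper's proof is organized identically, and your additional remark verifying that the coefficient of $e_{j+1}$ is nonzero in each case is exactly what is needed for the ``perturbed weighted forward shift'' conclusion.
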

\begin{proof}
We consider all the possible cases for $j$.
All the calculations below follow from \eqref{eq:operator}.
\begin{enumerate}[leftmargin=2\parindent]
\item By the definition we have $Te_0=\alpha_1e_1$.
\item If $j \in \left[\pos(\Delta_n,0),\pos(a_n,0)-1\right)$, then
      \[
        e_j
        = \frac{1}{\alpha_j} T^je_o.
      \]
      Therefore
      \[
        Te_j
        = \frac{1}{\alpha_j}T^{j+1}e_0=\frac{\alpha_{j+1}}{\alpha_j}e_{j+1}.
      \]
\item If $j = \pos(a_n,0)-1$, then
      \[
        e_{\pos(a_n,0)-1}
        = \frac{1}{\alpha_{\pos(a_n,0)-1}}T^{\pos(a_n,0)-1}e_0.
      \]
      Therefore
      \[
        Te_{\pos(a_n,0)-1}
        = \frac{1}{\alpha_{\pos(a_n,0)-1}}T^{\pos(a_n,0)}e_0
        = \frac{1}{\alpha_{\pos(a_n,0)-1}}\left(\frac{1}{A_{{N_n},a_n}}e_{\pos(a_n,0)} + e_0\right).
      \]
\item If $j \in \left[\pos(a_n,0),\pos(\Delta_{n+1},0)-1\right)$, then
      \[
        e_j = A_{N_n,a_n}\left(T^je_0-T^{j-\pos(a_n,0)}e_0 \right).
      \]
      Therefore
      \begin{align*}
        Te_j
        & = A_{N_n,a_n}\left(T^{j+1}e_0-T^{j+1-\pos(a_n,0)}e_0 \right)\\
        & = A_{N_n,a_n}\left(\frac{1}{A_{N_n,a_n}}e_{j+1}
            + T^{j+1-\pos(a_n,0)}e_0-T^{j+1-\pos(a_n,0)}e_0\right)\\
        & = e_{j+1}.
      \end{align*}
\item If $j = \pos(\Delta_{n+1},0)-1$, then
      \[
        e_{\pos(\Delta_{n+1},0)-1}
        = A_{N_n,a_n}\left(T^{\pos(\Delta_{n+1},0)-1}e_0-T^{\pos(\Delta_{n+1},0)-1-\pos(a_n,0)}e_0 \right).
      \]
      Using \eqref{eq:pos} we get
      \begin{align*}
        Te_{\pos(\Delta_{n+1},0)-1}
        & = A_{N_n,a_n}\left(T^{\pos(\Delta_{n+1},0)} e_0-T^{\pos(\Delta_n,0)}e_0 \right)\\
        & = A_{N_n,a_n}\alpha_{\pos(\Delta_{n+1},0)} e_{\pos(\Delta_{n+1},0)}
            - A_{N_n,a_n}\alpha_{\pos(\Delta_n,0)}e_{\pos(\Delta_n,0)}.\qedhere
      \end{align*}
\end{enumerate}
\end{proof}

\section{Continuity of \texorpdfstring{$T$}{T}}
In this section we show that the linear
operator $T$ defined on the space $s^\N_{0,0}$
can be extended to a continuous linear
operator acting from $s^\N$ to $s^\N$.
We also prove some continuity type
estimates which will be
important in the forthcoming sections.

\begin{lemma}
\label{lemma:estimate}
For every $j\in \mathbb{N}$
and $N\in\mathbb{N}$
we have $\|e_{j+1}\|_N \leq 2\|e_j\|_{N+1}$ and $\vertiii{e_{j+1}}_N\leq 2 \vertiii{e_j}_N$.
\end{lemma}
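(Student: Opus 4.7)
The plan is to unpack both inequalities using the explicit formulas $\|e_{n,m}\|_N = A_{N,n}$ if $m \leq N$ (and $0$ otherwise) together with $\vertiii{e_{n,m}}_N = A_{N,n}$, and then to reduce everything to the matrix estimates in Proposition \ref{prop:matrix} via the definition of $\Next$. Write $e_j = e_{n,m}$ with $\pos(n,m) = j$ and $(n',m') = \Next(n,m)$, so that $e_{j+1} = e_{n',m'}$. Both inequalities will become bounds on $A_{N,n'}$ in terms of $A_{N,n}$ or $A_{N+1,n}$.

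The key structural observation I would isolate first is that every clause of \eqref{def:next} falls into exactly one of three patterns: either $n' = n$ (the cases $\Next(n,m) = (n, m\pm 1)$), or $n' = n+1$ (which forces $m' = m$ even), or $n' = n-1$ (which forces $m' = m$ odd). In particular the first coordinate never jumps by more than one, and $m' \in \{m-1, m, m+1\}$. This is pure inspection of the piecewise formula.

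Using this, the inequality $\vertiii{e_{j+1}}_N \leq 2 \vertiii{e_j}_N$ reduces to $A_{N,n'} \leq 2 A_{N,n}$, which is immediate from monotonicity (Proposition \ref{prop:matrix}(3)) when $n' \in \{n, n-1\}$, and is precisely Proposition \ref{prop:matrix}(4) when $n' = n+1$. For the inequality $\|e_{j+1}\|_N \leq 2 \|e_j\|_{N+1}$, if $\|e_{j+1}\|_N = 0$ the bound is trivial; otherwise $m' \leq N$, and because $m' \in \{m-1, m, m+1\}$ this forces $m \leq N+1$, so $\|e_j\|_{N+1} = A_{N+1,n}$. We are left with $A_{N,n'} \leq 2 A_{N+1,n}$, which in the three cases follows from $A_{N,n} \leq A_{N+1,n}$, from $A_{N,n+1} \leq 2 A_{N,n} \leq 2 A_{N+1,n}$ by (4) combined with monotonicity in $N$, and from $A_{N,n-1} \leq A_{N,n} \leq A_{N+1,n}$, respectively.

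There is no genuine obstacle here beyond bookkeeping: the content of the lemma is that the ordering defined by $\Next$ never increases the column index by more than one at a time, so the factor $2$ comes directly from Proposition \ref{prop:matrix}(4). The only point that requires a little care is the degenerate case in which $\|e_{j+1}\|_N$ or $\|e_j\|_{N+1}$ vanishes, handled above by noting that $m' \leq N$ implies $m \leq N+1$.
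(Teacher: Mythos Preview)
Your proof is correct and follows the same line as the paper's, which simply states that the inequalities follow directly from the definitions of $e_j$, $\|\cdot\|_N$, and $\vertiii{\cdot}_N$; you have merely made explicit the case analysis on $\Next$ and the appeals to Proposition~\ref{prop:matrix} that the paper leaves to the reader.
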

\begin{proof}
This follows directly from the definition of the vectors
$e_j$ and the definition of
$\|\cdot\|_N$ and $\vvvert \cdot \vvvert_N$.
\end{proof}

\begin{proposition}
\label{prop:continuity}
The operator
$T\colon s_{0,0}^\N\to s_{0,0}^\N$
defined by \eqref{eq:operator}
satisfies the inequality
$\|Tx\|_N\leq 4\|x\|_{N+1}$
for every $x\in s_{0,0}^\N$
and $N\in\mathbb{N}$.

In particular, $T$ can be extended to a
continuous operator on $s^\N$.
Moreover, the extension of $T$
(which we also denote by $T$)
satisfies
the inequality
$\vertiii{Tx}_N\leq 4\vertiii{x}_N$
for every $x\in s_0^\N$
and $N\in \N$.
\end{proposition}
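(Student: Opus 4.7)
The plan is to reduce both inequalities to a pointwise check on the Schauder basis $(e_j)$ and then extend by density. The key algebraic observation is that both $\|\cdot\|_N$ and $\vertiii{\cdot}_N$ are $\ell_1$-type with respect to $(e_j)$: for $x = \sum_j x_j e_j \in s^\N_{0,0}$, the defining formulas give $\|x\|_N = \sum_j |x_j|\|e_j\|_N$ and $\vertiii{x}_N = \sum_j |x_j|\vertiii{e_j}_N$. Combined with the triangle inequality applied to $Tx = \sum_j x_j T e_j$, it therefore suffices to verify $\|Te_j\|_N \le 4\|e_j\|_{N+1}$ and $\vertiii{Te_j}_N \le 4\vertiii{e_j}_N$ for every $j$, $N \in \N$.

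The basis estimates follow from case-by-case inspection of the five branches of Proposition \ref{prop:values}, using Lemma \ref{lemma:estimate} throughout. In the three cases where $Te_j$ is a scalar multiple of $e_{j+1}$, the scalar has absolute value at most $2$: the consecutive ratio $\alpha_{j+1}/\alpha_j$ within any block of \eqref{eq:alphas} equals $2$, $1/(2D_{n-1}) \le 1/2$ (since $D_{n-1} \ge 1$ by Lemma \ref{lemma:theLemma}), or $1$ at the transition $j = \pos(s_n,0)-1$. The two boundary cases are more delicate. For $j = \pos(a_n,0)-1$, one uses $\alpha_j \ge 1$ from \eqref{eq:alpha_a_n} together with the identification $e_{\pos(a_n,0)-1} = e_{a_n-1,0}$, valid because $a_n \ge 2b_n+2$ places the preceding position in a stretch where $\Next$ moves straight down column $0$; property 4 of Proposition \ref{prop:matrix} then bounds $\|Te_j\|_N$ by $A_{N,a_n}/A_{N_n,a_n} + A_{N,0} \le 3A_{N,a_n-1} \le 3\|e_j\|_{N+1}$. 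For $j = \pos(\Delta_{n+1},0)-1$, the two coefficients in Proposition \ref{prop:values} simplify to $A_{N_n,a_n}/A_{N_{n+1},a_{n+1}} \le 1$ (by \eqref{eq:cond2}) and $1$, and $e_j = e_{\Delta_{n+1}-1,0}$ by the same column-$0$ argument; a final application of property 4 of Proposition \ref{prop:matrix} closes the bound.

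The extension of $T$ to $s^\N$ is automatic from the density of $s^\N_{0,0}$ in $s^\N$ and the estimate $\|Tx\|_N \le 4\|x\|_{N+1}$, which makes $T$ continuous in the Fréchet topology. For the $\vertiii{\cdot}_N$ inequality on $s_0^\N$, the same case analysis yields the bound directly on $s^\N_{0,0}$ (no column cutoff interferes here); any $x \in s_0^\N$ is then approximated by truncations $x_k = \sum_{m \le M}\sum_{i \le k} x_{i,m} e_{i,m} \in s^\N_{0,0}$, which converge to $x$ both coordinatewise and in $\vertiii{\cdot}_N$. Crucially, each $Te_{n,m}$ has support contained in columns $\{0, m-1, m, m+1\}$, so all the $Tx_k$ share support in the fixed finite column range $\{0, \ldots, M+1\}$; Fatou's lemma for the counting measure transfers the bound to the limit $Tx \in s_0^\N$. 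The only genuine obstacle is the bookkeeping in the two boundary cases — once the correct column-$0$ identification of the relevant indices in the $\Next$ ordering is established, everything reduces to direct invocations of Proposition \ref{prop:matrix} and the preparatory conditions assembled in Section \ref{sec:parameters}.
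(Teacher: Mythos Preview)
Your proof is correct and follows essentially the same approach as the paper: reduce to the basis vectors via the $\ell_1$-structure of the seminorms, run through the five cases of Proposition~\ref{prop:values} using Lemma~\ref{lemma:estimate}, \eqref{eq:alpha_a_n}, \eqref{eq:cond2}, and the properties of the matrix $(A_{N,j})$, and then extend by density. The only place you diverge from the paper is the passage from $s^\N_{0,0}$ to $s^\N_0$ for the $\vertiii{\cdot}_N$ bound: the paper simply writes $x\in s^\N_0$ as a (column-finite) infinite sum and applies the countable triangle inequality directly, whereas you route through truncations and Fatou; both are valid, and your version is arguably the more careful of the two.
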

\begin{proof}
Fix $N\in\N$. First we will show that for
every $(n,m)\in \N\times\N$ we have
\begin{equation}
\label{eq:cont}
  \left\|Te_{n,m}\right\|_N\leq 4\left\|e_{n,m}\right\|_{N+1}
  \quad\textrm{and}\quad
  \vertiii{Te_{n,m}}_N\leq 4\vertiii{e_{n,m}}_N.
\end{equation}
Let $j\in\N$ be the unique number such that
\[
  e_j=e_{n,m}.
\]
We consider all the possible cases.
\begin{enumerate}[leftmargin=2\parindent]
\item If $j=0$, then by Proposition \ref{prop:values}
      \[
        Te_0 = \alpha_1e_1.
      \]
      Hence, using Lemma \ref{lemma:estimate}
      \[
        \|Te_0\|_N
        = \vertiii{Te_0}_N
        = \vertiii{\alpha_1e_1}_N
        \leq 2 \vertiii{e_0}_N
        = 2 \|e_0\|_N.
      \]
\item If $j \in \left[\pos(\Delta_n,0),\pos(a_n,0)-1\right)$,
      then by Proposition \ref{prop:values}
      \[
        Te_j = \frac{\alpha_{j+1}}{\alpha_j}e_{j+1}.
      \]
      By the definition of the numbers $\alpha_j$ (see \eqref{eq:alphas}) we have
      \[
        \frac{\alpha_{j+1}}{\alpha_j} \leq 2.
      \]
      Therefore, using Lemma \ref{lemma:estimate}, we get
      \[
        \left\|Te_j\right\|_N
          = \left\|\frac{\alpha_{j+1}}{\alpha_j}e_{j+1}\right\|_N
          \leq 4 \left\|e_j\right\|_{N+1}
        \quad\textrm{and}\quad
        \vertiii{Te_j}_N
          = \vertiii{\frac{\alpha_{j+1}}{\alpha_j}e_{j+1}}_N
          \leq 4 \vertiii{e_j}_N.
      \]

\item If $j=\pos(a_n,0)-1$, then by Proposition \ref{prop:values}
      \[
        Te_{\pos(a_n,0)-1}
        = \frac{1}{\alpha_{\pos(a_n,0)-1}}
          \left(\frac{1}{A_{N_n,a_n}} e_{\pos(a_n,0)} +e_0\right).
      \]
      We have
      \begin{align*}
        \left\|
          \frac{1}{\alpha_{\pos(a_n,0)-1}}\left(\frac{1}{A_{N_n},a_n}e_{\pos(a_n,0)} + e_0\right)
        \right\|_N
        & = \vertiii{\frac{1}{\alpha_{\pos(a_n,0)-1}}\left(\frac{1}{A_{N_n,a_n}} e_{\pos(a_n,0)} + e_0\right)}_N \\
        & = \frac{\frac{A_{N,a_n}}{A_{N_n,a_n}}+A_{N,0}}{\alpha_{\pos(a_n,0)-1}}
      \end{align*}
      and
      \[
        \left\| e_{\pos(a_n,0)-1}\right\|_N
          = \vertiii{e_{\pos(a_n,0)-1}}_N
          = A_{N,a_n-1}.
      \]
      Moreover, from \eqref{eq:alpha_a_n} and Proposition \ref{prop:matrix}
      \[
        \frac{\frac{A_{N,a_n}}{A_{N_n,a_n}} + A_{N,0}}{\alpha_{\pos(a_n,0)-1}}
        \leq 4 A_{N,a_n-1}.
      \]
      
      This shows that
      \[
        \left\|T e_{\pos(a_n,0)-1}\right\|_N
        = \vertiii{Te_{\pos(a_n,0)-1}}_N
        \leq 4\left\| e_{\pos(a_n,0)-1}\right\|_N
        = 4 \vertiii{e_{\pos(a_n,0)-1}}_N.
      \]
\item If $j\in \left[\pos(a_n,0),\pos(\Delta_{n+1},0)-1\right)$,
      then by Proposition \ref{prop:values}
      \[
        Te_j = e_{j+1}.
      \]
      Therefore, using Lemma \ref{lemma:estimate}, we get
      \[
        \left\|Te_j\right\|_N
        = \left\|e_{j+1}\right\|_N\leq 4\left\|e_j\right\|_{N+1}
        \quad\textrm{and}\quad
        \vertiii{Te_j}_N
        = \vertiii{e_{j+1}}_N\leq 4\vertiii{e_j}_N.
      \]
\item If $j=\pos(\Delta_{n+1},0)-1$, then by Proposition \ref{prop:values}
      \[
        Te_{\pos(\Delta_{n+1},0)-1}
        = A_{N_n,a_n} \alpha_{\pos(\Delta_{n+1},0)} e_{\pos(\Delta_{n+1},0)}
          - A_{N_n,a_n} \alpha_{\pos(\Delta_n,0)} e_{\pos(\Delta_n,0)}.
      \]
      We have
      \begin{align*}
      & \left\|A_{N_n,a_n} \alpha_{\pos(\Delta_{n+1},0)} e_{\pos(\Delta_{n+1},0)}
          - A_{N_n,a_n}\alpha_{\pos(\Delta_n,0)}e_{\pos(\Delta_n,0)} \right\|_N\\
      & \hspace{2cm} =
        \vertiii{A_{N_n,a_n} \alpha_{\pos(\Delta_{n+1},0)} e_{\pos(\Delta_{n+1},0)}
        - A_{N_n,a_n} \alpha_{\pos(\Delta_n,0)} e_{\pos(\Delta_n,0)}}_N\\
      & \hspace{2cm} =
        A_{N_n,a_n} \alpha_{\pos(\Delta_{n+1},0)} A_{N,\Delta_{n+1}}
        + A_{N_n,a_n} \alpha_{\pos(\Delta_n,0)} A_{N,\Delta_n}\\
      & \hspace{2cm} \overset{\mathclap{\eqref{eq:alphas}}}{=}
        \frac{A_{N_n,a_n} A_{N,\Delta_{n+1}}}{A_{N_{n+1},a_{n+1}}}
        + \frac{A_{N_n,a_n}A_{N,\Delta_n}}{A_{N_n,a_n}}
      \end{align*}
      and
      \[
        \left\|e_{\pos(\Delta_{n+1},0)-1}\right\|_N
        = \vertiii{e_{\pos(\Delta_{n+1},0)-1}}_N=A_{N,\Delta_{n+1}-1}.
      \]
      By \eqref{eq:cond2} we have
      \[
        \frac{A_{N_n,a_n}}{A_{N_{n+1},a_{n+1}}} \leq 1.
      \]
      Therefore, since
      $A_{N,\Delta_{n+1}}\leq 2A_{N,\Delta_{n+1}-1}$ and
      $A_{N,\Delta_{n}}\leq 2A_{N,\Delta_{n+1}-1}$ (see Proposition \ref{prop:matrix}),
      we get that
      \[
        \frac{A_{N_n,a_n}A_{N,\Delta_{n+1}}}{A_{N_{n+1},a_{n+1}}}
          + \frac{A_{N_n,a_n}A_{N,\Delta_n}}{A_{N_n,a_n}}
        \leq  4 A_{N,\Delta_{n+1}-1}.
      \]
      This shows that
      \[
        \left\|Te_{\pos(\Delta_{n+1},0)-1}\right\|_N
        = \vertiii{Te_{\pos(\Delta_{n+1},0)-1}}_N
        \leq \left\|e_{\pos(\Delta_{n+1},0)-1}\right\|_N
        = \vertiii{e_{\pos(\Delta_{n+1},0)-1}}_N.
      \]
\end{enumerate}

After all the tedious calculations we are now ready to prove the desired inequalities.
Let $\left(x_{i,j}\right)_{i,j=0}^\infty\in s_{0,0}^\N$ be arbitrary.
Then
\begin{align*}
  \left\|T\left(x_{i,j}\right)_{i,j=0}^\infty\right\|_N
  = \left\|\sum_{i,j=0}^\infty x_{i,j}Te_{i,j}\right\|_N
  & \leq
    \sum_{i,j=0}^\infty |x_{i,j}|\left\| Te_{i,j} \right\|_N
  \overset{\eqref{eq:cont}}{\leq}
    4 \sum_{i,j=0}^\infty |x_{i,j}|\left\| e_{i,j} \right\|_{N+1}\\
  & =
    4 \sum_{j=0}^{N+1} \sum_{i=0}^\infty |x_{i,j}|\left\|e_{i,j} \right\|_{N+1}
  =
  4 \sum_{j=0}^{N+1} \left| \left(x_{i,j}\right)_{\vphantom{j}i=0}^\infty \right|_{N+1}\\
  & =
  4\left\|\left(x_{i,j}\right)_{i,j=0}^\infty\right\|_{N+1}.
\end{align*}
Since $s_{0,0}^\N$ is dense in $s^\N$,
this implies that $T$ can be extended
to a continuous operator on
$s^\N$. We denote this extension by $T$.

Let now $x=\left(x_{i,j}\right)_{i,j=0}^\infty\in s_0^\N$.
There is $M\in\mathbb{N}$ such that
\[
  x = \sum_{j=0}^M \sum_{i=0}^\infty x_{i,j}e_{i,j}.
\]
It is clear from Proposition \ref{prop:values} that $Tx\in s_0^\N$.
Moreover, from \eqref{eq:cont} we get
\begin{align*}
  \vertiii{T\left(x_{i,j}\right)_{i,j=0}^\infty}_N
  & = 
    \vertiii{\sum_{j=0}^M\sum_{i=0}^\infty x_{i,j}Te_{i,j}}_N
  \leq
    4 \sum_{j=0}^M \sum_{j=0}^\infty \left|x_{i,j}\right|\vertiii{e_{i,j}}_N
  =
    4 \vertiii{\sum_{j=0}^M \sum_{i=0}^\infty x_{i,j}e_{i,j}}_N.\qedhere
\end{align*}
\end{proof}

\section{The properties of the projection \texorpdfstring{$\tau_n$}{τₙ}}
In this section we investigate the properties of the projection
$\tau_n$ defined in Section \ref{sec:parameters}.

\begin{lemma}
\label{lemma:values_tau}
The projection $\tau_n\colon H_n\to H_n$ defined by \eqref{eq:tau_n} satisfies the equation
\begin{equation*}
\tau_n(e_j)
  = \begin{cases}
    e_j,                              & j\leq \pos(a_n,0)-1;\\
    -A_{N_n,a_n}T^{j-\pos(a_n,0)}e_0, & \pos(a_n,0)\leq j \leq \pos(\Delta_{n+1},0)-1.
  \end{cases}
\end{equation*}
\end{lemma}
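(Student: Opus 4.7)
The plan is to invert the defining relation \eqref{eq:operator} for $T^j e_0$ so that each basis vector $e_j$ (with $j \le \pos(\Delta_{n+1},0)-1$) is written as an explicit linear combination of the vectors $T^i e_0$ for $i \le \pos(\Delta_{n+1},0)-1$. Once this is done, $\tau_n$ acts simply by erasing every coefficient with index $\ge \pos(a_n,0)$, so the result is read off the expansion.

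First I split $\{0,1,\dots,\pos(\Delta_{n+1},0)-1\}$ along the blocks used in \eqref{eq:operator}. For $j$ in an interval of the form $[\pos(\Delta_k,0),\pos(a_k,0))$ with $k\le n$, equation \eqref{eq:operator} gives $T^j e_0 = \alpha_j e_j$, so $e_j = \alpha_j^{-1} T^j e_0$. For $j$ in an interval of the form $[\pos(a_k,0),\pos(\Delta_{k+1},0))$ with $k\le n$, equation \eqref{eq:operator} gives $T^j e_0 = A_{N_k,a_k}^{-1} e_j + T^{j-\pos(a_k,0)} e_0$, and therefore
\[
  e_j = A_{N_k,a_k}\bigl(T^j e_0 - T^{j-\pos(a_k,0)} e_0\bigr).
\]

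Now I apply $\tau_n$ case by case. If $j \le \pos(a_n,0)-1$, then either $j$ sits in some $[\pos(\Delta_k,0),\pos(a_k,0))$, in which case the single index $j$ appearing in the expansion is automatically $\le \pos(a_n,0)-1$ and $\tau_n$ fixes it; or $j$ sits in some $[\pos(a_k,0),\pos(\Delta_{k+1},0))$ with $k<n$. In the latter sub-case the two indices appearing are $j$ and $j-\pos(a_k,0)$. The first is $< \pos(\Delta_{k+1},0)\le \pos(\Delta_n,0)<\pos(a_n,0)$, and for the second one I use \eqref{eq:pos}, which gives $\pos(\Delta_{k+1},0)-\pos(a_k,0)=\pos(\Delta_k,0)$, so $j-\pos(a_k,0)<\pos(\Delta_k,0)\le \pos(a_n,0)-1$. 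Both indices survive $\tau_n$, hence $\tau_n(e_j)=e_j$.

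Finally, for $j\in[\pos(a_n,0),\pos(\Delta_{n+1},0)-1]$ I use the second formula with $k=n$:
\[
  e_j = A_{N_n,a_n}\bigl(T^j e_0 - T^{j-\pos(a_n,0)} e_0\bigr).
\]
Here the first index is $j\ge \pos(a_n,0)$, so $\tau_n$ kills the term $A_{N_n,a_n}T^j e_0$. By \eqref{eq:pos} the second index satisfies $j-\pos(a_n,0)<\pos(\Delta_{n+1},0)-\pos(a_n,0)=\pos(\Delta_n,0)\le \pos(a_n,0)-1$, so $\tau_n$ preserves the term $-A_{N_n,a_n}T^{j-\pos(a_n,0)}e_0$. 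This yields $\tau_n(e_j)=-A_{N_n,a_n}T^{j-\pos(a_n,0)}e_0$, as claimed. The only real content is the bookkeeping with \eqref{eq:pos} to verify that all relevant indices lie in the range where $\tau_n$ is either the identity or zero; there is no genuine obstacle beyond this case analysis.
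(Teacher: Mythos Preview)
Your proof is correct and follows essentially the same approach as the paper. The only presentational difference is in the case $j\le\pos(a_n,0)-1$: the paper observes abstractly that $\{e_0,Te_0,\dots,T^je_0\}$ is a perturbed canonical basis of $\Span\{e_0,\dots,e_j\}$, so the expansion of $e_j$ involves only indices $\le j\le\pos(a_n,0)-1$ and is therefore fixed by $\tau_n$, whereas you split into the two sub-cases and write out the explicit expansions---but the argument and the use of \eqref{eq:pos} in the second case are identical.
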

\begin{proof}
Let $j\leq \pos(\Delta_{n+1},0)-1$.
The vectors
$e_0, Te_0,\ldots T^je_0$
form a basis of $\Span\{e_0,\ldots,e_j\}$
and therefore
\[
  e_j = \sum_{i=0}^j \lambda_i T^i e_0
\]
for some numbers $\lambda_0,\ldots,\lambda_j$.

\begin{enumerate}[leftmargin=2\parindent]
\item If $j\leq \pos(a_n,0)-1$, then from \eqref{eq:tau_n} it follows that
      \[
        \tau_n \left(e_j\right)
          = \tau_n \left(\sum_{i=0}^j\lambda_iT^ie_0 \right)
          = \sum_{i=0}^j\lambda_iT^ie_0
          = e_j.
      \]
\item If $\pos(a_n,0)\leq j \leq \pos(\Delta_{n+1},0)-1$,
then from the definition of the operator $T$ (see \eqref{eq:operator})
it follows that
\[
  e_j=A_{N_n,a_n}\left(T^je_0-T^{j-\pos(a_n,0)}e_0\right).
\]
By \eqref{eq:pos} we have that
\[
 j-\pos(a_n,0) \leq \pos(\Delta_n,0) \leq \pos(a_n,0)-1.
\]
This and \eqref{eq:tau_n} gives that
\[
  \tau_n \left(e_j\right)
  = \tau_n \left(A_{N_n,a_n} \left(T^je_0-T^{j-\pos(a_n,0)} e_0\right)\right)
  = -A_{N_n,a_n} T^{j-\pos(a_n,0)}e_0. \qedhere
\]
\end{enumerate}
\end{proof}

The next proposition gives a continuity type estimate for the projection
$\tau_n$ which will be important for us in the next section.
\begin{proposition}
  \label{prop:cont_tau}
  Let $n\in N$ and $x\in H_n$. Then $\vertiii{\tau_n x}_0\leq \vertiii{x}_{N_n+1}$.
\end{proposition}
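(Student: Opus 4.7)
The plan is to bound $\vertiii{\tau_n x}_0$ by expanding $x$ in the reordered basis and reducing to a pointwise estimate on basis elements. Writing $x = \sum_{j=0}^{\pos(\Delta_{n+1},0)-1} x_j e_j$ and recalling that $\{e_j\}$ is a permutation of the unit vectors $\{e_{n,m}\}$, the definition of $\vertiii{\cdot}_N$ yields $\vertiii{x}_{N_n+1} = \sum_j |x_j|\,\vertiii{e_j}_{N_n+1}$. By the triangle inequality applied to $\tau_n x = \sum_j x_j \tau_n(e_j)$, it therefore suffices to establish the pointwise bound $\vertiii{\tau_n(e_j)}_0 \leq \vertiii{e_j}_{N_n+1}$ for each $j$ in the range.

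Lemma \ref{lemma:values_tau} splits this into two cases. For $j \leq \pos(a_n,0)-1$ we have $\tau_n(e_j) = e_j$, so the estimate reduces to $\vertiii{e_j}_0 \leq \vertiii{e_j}_{N_n+1}$, which is immediate from the monotonicity of $(A_{N,i})_N$ in Proposition \ref{prop:matrix}. For $\pos(a_n,0) \leq j \leq \pos(\Delta_{n+1},0)-1$, set $k = j - \pos(a_n,0)$; by \eqref{eq:pos} we have $k \leq \pos(\Delta_n,0)-1$. Iterating the continuity bound $\vertiii{Ty}_0 \leq 4\vertiii{y}_0$ from Proposition \ref{prop:continuity} gives $\vertiii{T^k e_0}_0 \leq 4^k A_{0,0}$, and combining this with $\tau_n(e_j) = -A_{N_n,a_n} T^k e_0$ and the key condition \eqref{eq:cond3} yields $\vertiii{\tau_n(e_j)}_0 \leq A_{N_n,a_n}\, 4^{\pos(\Delta_n,0)}\, A_{0,0} \leq A_{N_n+1,a_n}$.

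The main obstacle, and the only step that really uses the structure of the $\Next$-ordering, is to confirm that $\vertiii{e_j}_{N_n+1} \geq A_{N_n+1,a_n}$ in this second case. As Figure \ref{fig:two} suggests, after position $\pos(a_n,0)$ the $\Next$-path continues straight down the zeroth column through the row-indices $a_n, a_n+1, \ldots, \Delta_{n+1}-1$; hence $e_j = e_{a_n+(j-\pos(a_n,0)),\,0}$ and so $\vertiii{e_j}_{N_n+1} = A_{N_n+1,\,a_n+(j-\pos(a_n,0))} \geq A_{N_n+1,a_n}$ by monotonicity of $A_{N,\cdot}$ in the second index. To justify this geometric claim from the formula \eqref{def:next}, one observes that each of the row indices $a_n, a_n+1, \ldots, \Delta_{n+1}-1$ lies strictly between $2b_n$ and $b_{n+1}$ (by \eqref{eq:monotonicity} together with \eqref{cond:pos_bn}), so none of the special cases of $\Next$ can trigger and the generic even-$j$ rule $(i,0)\mapsto (i+1,0)$ advances the path one row at a time, as claimed.
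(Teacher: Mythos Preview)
Your proof is correct and follows essentially the same route as the paper's: reduce to the pointwise inequality $\vertiii{\tau_n(e_j)}_0 \leq \vertiii{e_j}_{N_n+1}$, handle the trivial case $j \leq \pos(a_n,0)-1$ via Lemma~\ref{lemma:values_tau}, and in the remaining range combine the iterated bound from Proposition~\ref{prop:continuity} with condition~\eqref{eq:cond3} against the lower bound $\vertiii{e_j}_{N_n+1} \geq A_{N_n+1,a_n}$. The only difference is that you spell out the combinatorial reason why $e_j$ lies in column~$0$ with row index at least $a_n$ for $j$ in this range, whereas the paper simply asserts $\vertiii{e_j}_{N_n+1} \geq \vertiii{e_{\pos(a_n,0)}}_{N_n+1}$ without further comment.
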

\begin{proof}
First we will show that for $j\leq \pos(\Delta_{n+1},0)-1$
we have $\vertiii{\tau_ne_j}_0\leq \vertiii{e_j}_{N_n+1}$.
To do this we consider all possible cases.
\begin{enumerate}[leftmargin=2\parindent]
\item If $j\leq \pos(a_n,0)-1$, then from Lemma \ref{lemma:values_tau}
      it follows that
      \[
        \tau_n(e_j) = e_j.
      \]
      Thus
      \[
        \vertiii{\tau_n(e_j)}_0 = \vertiii{e_j}_0\leq \vertiii{e_j}_{N_n+1}.
      \]
\item If $\pos(a_n,0)\leq j \leq \pos(\Delta_{n+1},0)-1$,
      then from Lemma \ref{lemma:values_tau} it follows that
      \[
        \tau_n(e_j) = -A_{N_n,a_n} T^{j-\pos(a_n,0)} e_0.
      \]
      Using Proposition \ref{prop:continuity} we get that
      \begin{align*}
        \vertiii{\tau_n(e_j)}_0
        = \vertiii{A_{N_n,a_n}T^{j-\pos(a_n,0)}e_0}_0
        \leq A_{N_n,a_n} 4^{j-\pos(a_n,0)} \vertiii{e_0}_0
        = A_{N_n,a_n}4^{j-\pos(a_n,0)}A_{0,0}.
      \end{align*}
      By \eqref{eq:pos} we have
      \[
        j-\pos(a_n,0) \leq \pos(\Delta_n,0),
      \]
      Therefore
      \begin{equation}
        \label{eq:1}
        \vertiii{\tau_n(e_j)}_0
        \leq A_{N_n,a_n} 4^{\pos(\Delta_n,0)}A_{0,0}.
      \end{equation}
      Since $\pos(a_n,0) \leq j \leq \pos(\Delta_{n+1},0)-1$, we get that
      \begin{equation}
        \label{eq:2}
        \vertiii{e_j}_{N_n+1}
        \geq \vertiii{e_{\pos(a_n,0)}}_{N_n+1}
        = \vertiii{e_{a_n,0}}_{N_n+1}=A_{N_n+1,a_n}.
      \end{equation}
      By \eqref{eq:cond3} we have that
      \[
        A_{N_n,a_n} 4^{\pos(\Delta_n,0)} A_{0,0}
        \leq A_{N_n+1,a_n}.
      \]
      Thus, putting now \eqref{eq:1} and \eqref{eq:2} together, we get
      \[
        \vertiii{\tau_n(e_j)}_0
        \leq \vertiii{e_j}_{N_n+1}.
      \]
\end{enumerate}

To finish the proof, take $x\in H_n$. Then
\[
  x = \sum_{j=0}^{\pos(\Delta_{n+1},0)-1} x_j e_j
\]
for some numbers $x_j$.
By the inequality which we have just proved we get
\[
  \vertiii{\tau_n(x)}_0
  = \vertiii{\sum_{j=0}^{\pos(\Delta_{n+1},0)-1} x_j \tau_n(e_j)}_0
  \leq \sum_{j=0}^{\pos(\Delta_{n+1},0)-1} |x_j| \cdot \vertiii{e_j}_{N_n+1}
  = \vertiii{x}_{N_n+1}.\qedhere
\]
\end{proof}

\section{Heads}
Recall that
\[
  H_n
  = \Span\left\lbrace e_j:
    j \leq \pos(\Delta_{n+1},0)-1
  \right\rbrace.
\]
We define now the projection (truncation)
\[
  \pi_n\colon s^\N\to H_n
\]
by the formula
\begin{equation}
\label{def:pi_n}
  \pi_n\left(\sum_{j=0}^\infty x_je_{j} \right)
  = \sum_{j=0}^{\pos(\Delta_{n+1}, 0)-1} x_je_{j}.
\end{equation}
Recall that in \eqref{eq:compacts} we defined compact sets
\begin{equation*}
  K_n
  = \left\lbrace
    y \in H_n:
      \vertiii{y}_0 \leq 1
      \text{ and }
      \vertiii{\tau_n(y)}_0\geq \frac{1}{2}
  \right\rbrace.
\end{equation*}
\begin{proposition}
\label{prop:head}
Let $N\in \N$ and
\[
  0 \neq x = \sum_{i,j=0}^\infty x_{i,j}e_{i,j}\in s^\N
\]
be such that
\begin{equation}
\label{eq:head}
  \left|(x_{i,k_0})_{\vphantom{j}i=0}^\infty\right|_0 = 2
\end{equation}
for some $k_0\in \N$.
Let $\left(n_l\right)_{l=0}^\infty$ be an increasing sequence of natural numbers
such that $N_{n_l}=N$ for $l\in \N$. Then for $l$ large enough we have that
\[
  \pi_{n_l}(x)\in\bigcup_{m=1}^\infty mK_{n_l}.
\]
\end{proposition}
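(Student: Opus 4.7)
The plan is to find, for $l$ sufficiently large, a positive integer $m$ satisfying
\[
  \vertiii{\pi_{n_l}(x)}_0 \leq m \leq 2\vertiii{\tau_{n_l}\pi_{n_l}(x)}_0,
\]
so that $\pi_{n_l}(x)/m \in K_{n_l}$ and hence $\pi_{n_l}(x) \in m K_{n_l}$. The heart of the argument is to show that $\tau_{n_l}\pi_{n_l}(x)$ is very close to $\pi_{n_l}(x)$ in the $\vertiii{\cdot}_0$ norm for large $l$.

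The crucial structural step is to identify the ``tail'' of the $n$-th interval---the positions in $[\pos(a_n,0),\pos(\Delta_{n+1},0))$---with the points $(a_n,0),(a_n+1,0),\ldots,(\Delta_{n+1}-1,0)$, all lying in column $0$. Inspection of \eqref{def:next} at a point $(i,0)$ shows $\Next(i,0)=(i+1,0)$ except when $i=b_m$ for some $m\geq 1$. By \eqref{eq:monotonicity} we have $b_m<a_m\leq a_n$ for $m\leq n$, and $b_{n+1}\geq 2\pos(\Delta_{n+1},0)\geq 2\Delta_{n+1}>\Delta_{n+1}$ (using $\pos(i,0)\geq i$, which is immediate from the ordering). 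So no $b_m$ lies in $[a_n,\Delta_{n+1})$. Writing $\pi_{n_l}(x)=y_h+y_t$ with $y_t:=\sum_{i=a_{n_l}}^{\Delta_{n_l+1}-1}x_{i,0}e_{i,0}$, the convergence of $\sum_i|x_{i,0}|A_{N+1,i}$ (which holds because $(x_{i,0})_{i=0}^\infty\in s$) gives $\vertiii{y_t}_{N+1}\to 0$ as $l\to\infty$. Since $\tau_{n_l}$ is the identity on $y_h$ by Lemma \ref{lemma:values_tau} and Proposition \ref{prop:cont_tau} (with $N_{n_l}=N$) provides $\vertiii{\tau_{n_l}y_t}_0\leq\vertiii{y_t}_{N+1}$, the triangle inequality yields
\[
  \vertiii{\tau_{n_l}\pi_{n_l}(x)}_0 \geq \vertiii{\pi_{n_l}(x)}_0 - \vertiii{y_t}_0 - \vertiii{\tau_{n_l}y_t}_0 \geq \vertiii{\pi_{n_l}(x)}_0 - 2\vertiii{y_t}_{N+1}.
\]

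Finally, the hypothesis $|(x_{i,k_0})_{i=0}^\infty|_0=2$ combined with the fact that every fixed $(i,k_0)$ eventually enters the support of $\pi_{n_l}$ gives, by monotone convergence, $\sum_{i:\pos(i,k_0)<\pos(\Delta_{n_l+1},0)}|x_{i,k_0}|A_{0,i}\nearrow 2$. Since this column-$k_0$ contribution is dominated by $\vertiii{\pi_{n_l}(x)}_0$, we obtain $\vertiii{\pi_{n_l}(x)}_0\geq 3/2$ for $l$ large. Taking $l$ still larger so that $\vertiii{y_t}_{N+1}\leq 1/8$ and setting $m:=\lceil\vertiii{\pi_{n_l}(x)}_0\rceil\geq 2$, the elementary inequality $\lceil A\rceil\leq 2A-4\varepsilon$ (valid whenever $A\geq 1+4\varepsilon$) yields $m\leq 2\vertiii{\tau_{n_l}\pi_{n_l}(x)}_0$, while $m\geq\vertiii{\pi_{n_l}(x)}_0$ is automatic. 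The main obstacle is the structural identification of the tail as a single column-$0$ segment: without its confinement to one column, one would have to estimate $\vertiii{y_t}_{N+1}$ as a sum spanning many columns of a general $x\in s^\N$, which is precisely the kind of control forbidden by the absence of a continuous norm on $s^\N$.
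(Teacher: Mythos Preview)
Your proof is correct and follows essentially the same route as the paper: split $\pi_{n_l}(x)$ into the part with indices below $\pos(a_{n_l},0)$ (on which $\tau_{n_l}$ is the identity) and the remainder $y_t$, bound $\vertiii{\tau_{n_l}y_t}_0$ via Proposition~\ref{prop:cont_tau}, and use the column-$k_0$ hypothesis to force $\vertiii{\pi_{n_l}(x)}_0$ bounded below. You are in fact more explicit than the paper on two points it leaves implicit---the confinement of the indices $[\pos(a_n,0),\pos(\Delta_{n+1},0))$ to column $0$ (which is exactly why $\vertiii{y_t}_{N+1}\to 0$ despite the absence of a continuous norm), and the verification that an \emph{integer} $m$ lies in $[\vertiii{\pi_{n_l}(x)}_0,\,2\vertiii{\tau_{n_l}\pi_{n_l}(x)}_0]$.
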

\begin{proof}
We can write
\[
  x = \sum_{j=0}^\infty x'_j e_j
\]
for some sequence $(x'_j)_{j=0}^\infty$ of scalars.
Let
\begin{equation}
  \label{eq:im1}
  C_{n_l}
  = \vertiii{\pi_{n_l}(x)}_0
  = \vertiii{\sum_{j=0}^{\pos(\Delta_{n_l+1}, 0)-1} x'_j e_j}_0.
\end{equation}
From \eqref{eq:head} it easily follows that
\begin{equation}
  \label{eq:im2}
  C_{n_l}
  \geq 1 \text{ for $l$ large enough}.
\end{equation}
Observe that
\begin{align*}
  \vertiii{\tau_{n_l}(\pi_{n_l}(x))}_0
  & = \vertiii{\tau_{n_l}\left(\sum_{j=0}^{\pos(\Delta_{n_l+1}, 0)-1} x'_je_{j}\right)}_0
  = \vertiii{\tau_{n_l}\left(\sum_{j=0}^{\pos(a_{n_l},0)-1} x'_je_{j}\right)
    + \tau_{n_l}\left(\sum_{j=\pos(a_{n_l},0)}^{\pos(\Delta_{n_l+1}, 0)-1} x'_je_{j}\right)}_0\\
  & \overset{\mathclap{\ref{lemma:values_tau}}}{=}
    \vertiii{\sum_{j=0}^{\pos(a_{n_l},0)-1} x'_je_{j}
    + \tau_{n_l}\left(\sum_{j=\pos(a_{n_l},0)}^{\pos(\Delta_{n_l+1}, 0)-1} x'_je_{j}\right)}_0\\
  & \overset{\mathclap{\ref{prop:cont_tau}}}{\geq}
    \vertiii{\sum_{j=0}^{\pos(a_{n_l},0)-1} x'_je_{j}}_0
    - \vertiii{\sum_{j=\pos(a_{n_l},0)}^{\pos(\Delta_{n_l+1}, 0)-1} x'_je_{j}}_{N+1}\\
  & \geq
    \vertiii{\sum_{j=0}^{\pos(\Delta_{n_l+1}, 0)-1} x'_je_{j}}_0
    - \vertiii{\sum_{j=\pos(a_{n_l},0)}^{\pos(\Delta_{n_l+1}, 0)-1} x'_je_{j}}_{0}
    - \vertiii{\sum_{j=\pos(a_{n_l},0)}^{\pos(\Delta_{n_l+1}, 0)-1} x'_je_{j}}_{N+1}\\
  & \geq
    C_{n_l} - 2 \vertiii{\sum_{j=\pos(a_{n_l},0)}^{\pos(\Delta_{n_l+1}, 0)-1} x'_je_{j}}_{N+1}.
\end{align*}
It is clear that
\[
  \lim_{l\to\infty}
  2 \vertiii{\sum_{j=\pos(a_{n_l},0)}^{\pos(\Delta_{n_l+1},0)-1} x'_je_{j}}_{N+1} = 0.
\]
Therefore, since $C_{n_l}\geq 1$ for $l$ large enough, from the above inequality we get that for $l$ large enough we have
\begin{equation}
\label{eq:im3}
  \vertiii{\tau_{n_l}(\pi_{n_l}(x))}_0 \geq \frac{C_{n_l}}{2}.
\end{equation}
Thus from \eqref{eq:im1}, \eqref{eq:im2} and \eqref{eq:im3}
it follows that for $l$ large enough
\[
  \pi_{n_l}(x)\in\bigcup_{m=1}^\infty mK_{n_l}.\qedhere
\]
\end{proof}

From the above proposition and Corollary \ref{cor:heads} we get:
\begin{corollary}
  \label{cor:last}
  Let $N\in \N$ and
  \[
    0 \neq x = \sum_{i,j=0}^\infty x_{i,j}e_{i,j}\in s^\N
  \]
  be such that
  \begin{equation*}
    \label{eq:head1}
    \left|(x_{i,k_0})_{\vphantom{j}i=0}^\infty\right|_0 = 2
  \end{equation*}
  for some $k_0\in \N$.
  Let $(n_l)_{l=0}^\infty$ be an increasing sequence of natural numbers
  such that $N_{n_l} = N$ for $l \in \N$. Then for every $l$ large enough
  there exists a polynomial
  \[
    P(t)=\sum_{i=1}^{\pos(\Delta_{n_l+1},0)}c_it^i
    \text{ with }
    \sum_{i=1}^{\pos(\Delta_{n_l+1},0)}|c_i|\leq D_{n_l}
  \]
  and
  \[
    \vertiii{P(T)\pi_{n_l}(x)-e_0}_{N}\leq 3.
  \]
\end{corollary}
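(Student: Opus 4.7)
The plan is to deduce the corollary by directly chaining Proposition \ref{prop:head} with Corollary \ref{cor:heads}. First, I would observe that the hypotheses on $x$ and on the increasing sequence $(n_l)_{l=0}^\infty$ are exactly those of Proposition \ref{prop:head}; applying it produces a threshold $l_0$ such that $\pi_{n_l}(x)\in \bigcup_{m=1}^\infty m K_{n_l}$ for every $l\geq l_0$.

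Next, for each such $l$, I would feed $y=\pi_{n_l}(x)$ into Corollary \ref{cor:heads} with parameter $n=n_l$. The corollary then returns a polynomial
\[
  P(t) = \sum_{i=1}^{\pos(\Delta_{n_l+1},0)} c_i t^i
\]
with $\sum_{i=1}^{\pos(\Delta_{n_l+1},0)} |c_i|\leq D_{n_l}$ and $\vertiii{P(T)\pi_{n_l}(x)-e_0}_{N_{n_l}}\leq 3$. Invoking the standing assumption $N_{n_l}=N$ converts the seminorm index on the left-hand side and yields precisely the asserted inequality.

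I do not expect any genuine obstacle, since all analytic and combinatorial work has already been absorbed upstream: into Proposition \ref{prop:heads} (the inductive use of Lemma \ref{lemma:theLemma} together with the parameter choices of Section \ref{sec:parameters}), into its rescaled version Corollary \ref{cor:heads}, and into Proposition \ref{prop:head} (which relies on Proposition \ref{prop:cont_tau} to show that $\tau_{n_l}$ preserves at least half of the $\vertiii{\cdot}_0$-mass of $\pi_{n_l}(x)$ once $l$ is large). The only minor point to verify is that the single threshold produced by Proposition \ref{prop:head} already suffices -- Corollary \ref{cor:heads} imposes no further enlargement of $l$ -- and that the seminorm-matching step reduces to the one-line substitution $N_{n_l}=N$.
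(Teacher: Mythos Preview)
Your proposal is correct and matches the paper's own argument essentially verbatim: the paper derives Corollary~\ref{cor:last} by simply stating that it follows from Proposition~\ref{prop:head} and Corollary~\ref{cor:heads}, exactly the chain you describe, including the substitution $N_{n_l}=N$ to identify the seminorm index.
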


\section{Tails}
Observe that (heuristically) if $j$ is much larger then $i$, then $e_{j+i}$ is always in the same column as $e_j$
or in one of the adjacent ones. More precisely:
\begin{lemma}
\label{lemma:snail}
If $j\geq \pos(\Delta_{n+1},0)$ and $1\leq i\leq \pos(\Delta_{n+1},0)$,
then for every $N\in \N$ we have
\[
  \|e_{i+j}\|_N \leq 2^i \|e_j\|_{N+1}.
\]
\end{lemma}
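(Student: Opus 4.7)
The plan is to split the argument into cases according to the second coordinates of $e_j$ and $e_{j+i}$, reducing the whole lemma to a combinatorial statement about how much the second coordinate can decrease under $\Next$.

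Write $e_j = e_{n_1, m_1}$ and $e_{j+i} = e_{n_2, m_2}$, so that $\|e_{j+i}\|_N = A_{N, n_2}$ when $m_2 \leq N$ and $0$ otherwise, and $\|e_j\|_{N+1} = A_{N+1, n_1}$ when $m_1 \leq N+1$ and $0$ otherwise. Each application of $\Next$ changes exactly one coordinate by $\pm 1$, so $|n_2 - n_1| \leq i$; combined with parts (2)--(4) of Proposition \ref{prop:matrix}, this gives the auxiliary bound
\[
  A_{N, n_2} \;\leq\; A_{N+1, n_2} \;\leq\; 2^{\max\{0,\, n_2 - n_1\}} A_{N+1, n_1} \;\leq\; 2^i A_{N+1, n_1}.
\]
This already settles the lemma both when $m_2 > N$ (trivially) and when $m_1 \leq N+1$ (directly from the display). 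What remains is to exclude the case $m_2 \leq N < m_1 - 1$, and it is enough for this to prove the structural inequality $m_2 \geq m_1 - 1$.

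Call a position $p$ a \emph{drop} if $\Next$ sends $e_p = e_{n, m}$ to $e_{n, m-1}$. From \eqref{def:next}, this means $(n, m) = (b_k+1, m)$ with $m$ odd and $m < 2k$, or $(n, m) = (2b_k, m)$ with $m$ even and $0 < m \leq 2k$. By a direct trace of the ordering, I would establish the following: inside the zigzag of any step $n'$ (the segment of the walk between positions $\pos(b_{n'}, 0)$ and $\pos(b_{n'}+1, 0)$), the only drops that actually occur are those with $k = n'$; they alternate between the rows $2b_{n'}$ and $b_{n'}+1$ and are consequently spaced exactly $b_{n'}$ positions apart. Outside the zigzags the second coordinate is constantly $0$, so no further drops appear there. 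The heart of the check is that whenever the trajectory travels horizontally at a fixed odd $m$ in the ascending half of a step-$n'$ zigzag it turns upward at $(2b_{n'-1}+1, m)$ (or at $(0, m)$ when $m = 2n'-1$) before any smaller $(b_k+1, m)$-drop becomes reachable, and analogously for even $m$ the turn at $(b_{n'}, m)$ is encountered before any $(2b_k, m)$-drop with $k < n'$; both facts rest on the growth condition $2b_k + 1 < b_{k+1}$. This case-by-case inspection is the main technical obstacle of the proof.

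Granting the drop-spacing statement, any two consecutive drops with indices past $\pos(\Delta_{n+1}, 0)$ lie in some step $n' \geq n+1$ and are therefore at least $b_{n'} \geq b_{n+1}$ positions apart. By condition \eqref{cond:pos_bn} together with the choice $b_{n+1} \geq 2\pos(\Delta_{n+1}, 0)$ fixed in Section \ref{sec:parameters}, we have $b_{n+1} \geq 2 \pos(\Delta_{n+1}, 0) \geq 2i > i$, so the interval $\{j, j+1, \dots, j+i-1\}$ contains at most one drop. Hence the second coordinate decreases at most once from $e_j$ to $e_{j+i}$, $m_2 \geq m_1 - 1$ follows, and the case analysis is complete.
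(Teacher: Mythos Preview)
Your argument is correct and follows the same route as the paper's: both reduce the lemma to the combinatorial fact that, starting past $\pos(\Delta_{n+1},0)$, the second coordinate can change by at most one over the next $i\le\pos(\Delta_{n+1},0)$ steps of $\Next$, and both point to condition~\eqref{cond:pos_bn} (i.e.\ $b_{n+1}\ge 2\pos(\Delta_{n+1},0)$) for this. Your write-up is considerably more detailed than the paper's two-line sketch and in fact isolates the relevant direction $m_2\ge m_1-1$ (the paper records $l\le q+1$, which on its own is the harmless direction), so your version is the cleaner one.
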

\begin{proof}
Observe that
if $i,j$ are as above and $e_j = e_{p,q}$ for some $p$ and $q$, then
$e_{i+j} = e_{k,l}$, where $k \in \N$ and $l \leq q+1$ (see the definition of
the function $\Next$ and the condition \eqref{cond:pos_bn}).
The desired inequality follows now from Lemma \ref{lemma:estimate}.
\end{proof}
For every $n \in \N$ we define the linear space
\begin{equation}
  \label{def:tails}
  T_n = s^\N \ominus H_n.
\end{equation}
The elements of $T_n$ can be regarded as tails of vectors from $s^\N$.

\begin{proposition}
  \label{prop:tails}
  If $x\in T_n$ and $1 \leq i \leq \pos(\Delta_{n+1},0)$, then
  \[
    \|T^ix\|_{N_n}
    \leq \frac{1}{D_n}\|x\|_{N_n+2}.
  \]
\end{proposition}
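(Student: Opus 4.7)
The plan is to reduce the inequality to a pointwise bound on basis vectors and then handle the bound by case analysis. Writing $x=\sum_{j\geq\pos(\Delta_{n+1},0)}x_je_j$ and using that the seminorm $\|\cdot\|_{N_n}$ is additive over the basis $(e_j)$ (each $e_{p,q}$ occupies a single coordinate in $s^\N$, so the triangle inequality is sharp on disjoint supports), one has
\[
  \|T^ix\|_{N_n}\leq\sum_j|x_j|\,\|T^ie_j\|_{N_n}
  \qquad\text{and}\qquad
  \|x\|_{N_n+2}=\sum_j|x_j|\,\|e_j\|_{N_n+2}.
\]
Hence it suffices to establish the pointwise estimate
\[
  \|T^ie_j\|_{N_n}\leq \frac{1}{D_n}\|e_j\|_{N_n+2}
\]
for every $j\geq\pos(\Delta_{n+1},0)$ and every $1\leq i\leq\pos(\Delta_{n+1},0)$.

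The pointwise estimate is proved by case analysis on which sub-interval contains $j$ (and $j+i$) among $[\pos(\Delta_{n'},0),\pos(s_{n'},0))$ (decay regime), $[\pos(s_{n'},0),\pos(a_{n'},0))$ (expansion regime), and $[\pos(a_{n'},0),\pos(\Delta_{n'+1},0))$ (second half), for $n'\geq n+1$. Iterating Proposition \ref{prop:values}, $T^ie_j$ equals a scalar multiple of $e_{j+i}$ plus possible perturbation summands produced whenever the orbit crosses one of the special positions $\pos(a_{n'},0)-1$ (which contributes an $\alpha_{\pos(a_{n'},0)-1}^{-1}e_0$-term, propagated forward by further powers of $T$ to some $T^ke_0$ with $k<\pos(\Delta_{n+1},0)$) or $\pos(\Delta_{n'+1},0)-1$ (which contributes a $-e_{\pos(\Delta_{n'},0)}$-term after simplification via the identity $A_{N_{n'},a_{n'}}\alpha_{\pos(\Delta_{n'},0)}=1$ from \eqref{eq:alphas}). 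The two principal non-crossing cases handle themselves cleanly. In the decay regime the accumulated factor $(2D_{n'-1})^{-i}$ combines with Lemma \ref{lemma:snail} to give $\|T^ie_j\|_{N_n}\leq D_{n'-1}^{-i}\|e_j\|_{N_n+1}\leq \frac{1}{D_n}\|e_j\|_{N_n+2}$, using that the sequence $(D_n)$ can be arranged to be monotone increasing in the construction (by replacing $D_n$ with $\max(D_n,D_{n-1})$ at each inductive step, which preserves all required conditions). In the expansion regime the factor equals $2^i$ but $e_j=e_{p,0}$ with $p\geq s_{n'}>b_{n+1}$; then Lemma \ref{lemma:snail} gives $\|T^ie_j\|_{N_n}\leq 4^i\|e_j\|_{N_n+1}$, and \eqref{cond:2bn} applied with index $n+1$ yields $\|e_j\|_{N_n+2}\geq 4^{\pos(\Delta_{n+1},0)}D_n\|e_j\|_{N_n+1}\geq 4^iD_n\|e_j\|_{N_n+1}$, which exactly absorbs the $4^i$ factor.

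The main technical obstacle lies in the boundary-crossing cases, where the extra perturbation summands must be controlled. At a crossing of $\pos(a_{n'},0)-1$, the prefactor of the extra $e_0$-term is $\alpha_{\pos(a_{n'},0)-1}^{-1}\leq 1$ by \eqref{eq:alpha_a_n}, and the term pushed forward by the remaining $T$'s is $T^ke_0$ with $k<\pos(\Delta_{n+1},0)$, a combination of basis vectors of explicit magnitude given by \eqref{eq:alphas} and Proposition \ref{prop:values}; at a crossing of $\pos(\Delta_{n'+1},0)-1$ the extra term is a single basis vector whose norm is immediate. In every subcase the $\|\cdot\|_{N_n}$-norm of the perturbation is dominated by $\frac{1}{D_n}\|e_j\|_{N_n+2}$, thanks to the combined headroom provided by \eqref{cond:2bn}, \eqref{eq:cond3}, \eqref{eq:cond4}, and the smallness of $\alpha_j$ in the decay regime. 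The verification is essentially a routine matter of tracking explicit constants through a handful of sub-cases determined by the interval $n'$, the sub-interval of $j$, the sub-interval of $j+i$, and the row of $e_j$; I expect the bookkeeping rather than any single inequality to be the hardest part.
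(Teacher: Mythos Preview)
Your reduction to the pointwise inequality $\|T^ie_j\|_{N_n}\leq\frac{1}{D_n}\|e_j\|_{N_n+2}$ and your treatment of the pure decay regime at level $n+1$ (the range $\pos(\Delta_{n+1},0)\leq j<\pos(s_{n+1},0)-\pos(\Delta_{n+1},0)$, where the accumulated factor is $(2D_n)^{-i}$) are correct and coincide with the paper's Case~1. Beyond that, the two approaches diverge substantially.

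The paper does \emph{not} track the perturbation terms produced at boundary crossings. Instead, for every $j\geq\pos(s_{n+1},0)-\pos(\Delta_{n+1},0)$ it writes $e_j=e_{k,l}$ and splits on the column index $l$. If $l\geq N_n+2$, the snail geometry together with \eqref{cond:pos_bn} guarantees that $T^ie_j$ has no component in columns $\leq N_n$, so $\|T^ie_j\|_{N_n}=0$ outright. If $l\leq N_n+1$, then (using $N_n\leq n$ and the snail geometry) one has $k\geq b_{n+1}$; now the \emph{global} continuity bound $\vertiii{T^ie_j}_{N_n}\leq 4^i\vertiii{e_j}_{N_n}\leq 4^{\pos(\Delta_{n+1},0)}A_{N_n+1,k}$ from Proposition~\ref{prop:continuity}, which already absorbs every crossing term, combines with \eqref{cond:2bn} to give $\frac{1}{D_n}A_{N_n+2,k}=\frac{1}{D_n}\|e_j\|_{N_n+2}$. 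No crossing bookkeeping is needed at all.

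Your proposal, by contrast, carries two genuine gaps. First, enforcing monotonicity of $(D_n)$ ``by replacing $D_n$ with $\max(D_n,D_{n-1})$'' alters the construction rather than proving the proposition for the operator as defined; without that assumption your decay-regime bound $D_{n'-1}^{-i}\|e_j\|_{N_n+1}$ at levels $n'>n+1$ does not obviously yield $\frac{1}{D_n}\|e_j\|_{N_n+2}$. Second, the boundary-crossing sub-cases are declared ``routine'' but not actually verified; for instance, after a crossing at $\pos(a_{n'},0)-1$ the residual term $\alpha_{\pos(a_{n'},0)-1}^{-1}T^ke_0$ with $k<\pos(\Delta_{n+1},0)$ can have $\vertiii{\cdot}_{N_n}$-norm as large as $4^{\pos(\Delta_{n+1},0)}A_{N_n,0}$, and you have not shown how this is absorbed by $\frac{1}{D_n}\|e_j\|_{N_n+2}$. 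The paper's column dichotomy, together with the uniform $\vertiii{\cdot}$-bound, is precisely the device that makes all of this automatic and renders your explicit tracking unnecessary.
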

\begin{proof}
First we will show that for every $j\geq \pos(\Delta_{n+1},0)$ and
$1 \leq i \leq \pos(\Delta_{n+1},0)$ we have
\begin{equation}
  \label{eq:tails1}
  \|T^ie_j\|_{N_n}
  \leq \frac{1}{D_n}\|e_j\|_{N_n+2}.
\end{equation}
We consider all possible cases for $j$.
\begin{enumerate}[leftmargin=2\parindent]
\item If $\pos(\Delta_{n+1},0)\leq j <\pos(s_{n+1},0)-\pos(\Delta_{n+1},0)$, then
      by \eqref{eq:operator}
      \[
        e_j = \frac{1}{\alpha_j} T^j e_0.
      \]
      Therefore, once again by \eqref{eq:operator},
      \[
        T^i e_j 
        = \frac{1}{\alpha_j} T^{i+j} e_0 
        = \frac{\alpha_{i+j}}{\alpha_j} e_{i+j}.
      \]
      From \eqref{eq:alphas} we get
      \[
        \frac{\alpha_{i+j}}{\alpha_j} = \frac{1}{2^iD_n^i}.
      \]
      Thus, using Lemma \ref{lemma:snail}, we obtain
      \[
        \|T^ie_j\|_{N_n}
        = \frac{1}{2^iD_n^i} \|e_{i+j}\|_{N_n}
        \leq \frac{1}{D_n^i} \|e_j\|_{N_n+1}
        \leq \frac{1}{D_n}   \|e_j\|_{N_n+2}.
      \]
\item If $\pos(s_p,0)-\pos(\Delta_{n+1},0) \leq j < \pos(s_{p+1},0)-\pos(\Delta_{n+1},0)$
      for some $p>n$ and $e_j = e_{k,l}$, where $l \geq N_n+2$,
      then from the construction of the function $\Next$,
      from the definition of $T$ and from \eqref{cond:pos_bn}
      it follows that
      \[
        \|T^ie_j\|_{N_n} = 0
      \]
      and therefore \eqref{eq:tails1} holds.
\item If $\pos(s_p,0)-\pos(\Delta_{n+1},0) \leq j < \pos(s_{p+1},0)-\pos(\Delta_{n+1},0)$
      for some $p > n$ and $e_j = e_{k,l}$, where
      $l \leq N_n+1$, then from the construction of the function $\Next$ and from
      \eqref{eq:N_n} it follows that $k \geq b_{n+1}$.
      We have
      \begin{equation}
        \label{eq:tails2}
        \|e_j\|_{N_n+2} 
        = \|e_{k,l}\|_{N_n+2}
        = A_{N_n+2,k}.
      \end{equation}
      Using Proposition \ref{prop:continuity} we get that
      \begin{align*}
        \|T^ie_j\|_{N_n}
        & \leq \vertiii{T^ie_j}_{N_n}
        \leq 4^i \vertiii{e_j}_{N_n}
        \leq 4^{\pos(\Delta_{n+1},0)} \vertiii{e_j}_{N_n} \\
        & \leq 4^{\pos(\Delta_{n+1},0)}\|e_j\|_{N_n+1}
        = 4^{\pos(\Delta_{n+1},0)} A_{N_n+1,k}\\
        & \overset{\mathclap{\eqref{cond:2bn}}}{\leq}
          \frac{A_{N_n+2,k}}{D_n}
        \overset{\mathclap{\eqref{eq:tails2}}}{=}
          \frac{1}{D_n}\|e_j\|_{N_n+2}.
      \end{align*}
      \end{enumerate}
To finish the proof take $x\in T_n$. Then
\[
  x = \sum_{j=\pos(\Delta_{n+1},0)}^\infty x_je_j
\]
and by the inequality we have just proven we have that
\begin{equation*}
  \|T^ix \|_{N_n}
  \leq \frac{1}{D_n} \sum_{j=\pos(\Delta_{n+1},0)}^\infty |x_j|\|e_j\|_{N_n+2}
  =    \frac{1}{D_n} \|x\|_{N_n+2}.\qedhere
\end{equation*}
\end{proof}

\section{The operator \texorpdfstring{$T$}{T} has no non-trivial invariant subspaces}
After all the hard work done in the previous sections we are ready to prove the main result of the paper.
\begin{theorem*}
  There exists a continuous operator $T \colon s^\N \to s^\N$ which has no non-trivial invariant subspaces.
\end{theorem*}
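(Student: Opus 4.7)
The plan is to verify the polynomial approximation criterion of Proposition \ref{prop:strategy} for the operator $T$. Since $T$ is a perturbed weighted forward shift by Proposition \ref{prop:values}, the vector $e_0$ is cyclic, and by Proposition \ref{prop:strategy} it is enough to show that for every non-zero $x\in s^\N$ and every $N\in\N$ one can produce a polynomial $P$ with $\|P(T)x-e_0\|_N\leq 4$. Fix such $x$ and $N$. Replacing $x$ by a scalar multiple (and later absorbing the scalar into $P$, since $P(T)(cx)=(cP)(T)x$), we may assume $|(x_{i,k_0})_{i}|_0=2$ for some $k_0$, which is the hypothesis of Corollary \ref{cor:last}. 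Because every natural number occurs infinitely often in $(N_n)$, we also fix an increasing sequence $(n_l)$ with $N_{n_l}=N$.

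The crucial idea is a head/tail decomposition $x=\pi_{n_l}(x)+t_{n_l}$ with $t_{n_l}=x-\pi_{n_l}(x)\in T_{n_l}$, and then to bound the two pieces by two quite different mechanisms built in the earlier sections. For the head, Corollary \ref{cor:last} supplies, for all sufficiently large $l$, a polynomial
\begin{equation*}
  P(t)=\sum_{i=1}^{\pos(\Delta_{n_l+1},0)}c_it^i,\qquad \sum_{i=1}^{\pos(\Delta_{n_l+1},0)}|c_i|\leq D_{n_l},
\end{equation*}
with $\vertiii{P(T)\pi_{n_l}(x)-e_0}_N\leq 3$. Since $P(T)\pi_{n_l}(x)-e_0\in s^\N_0$ and on that subspace $\|\cdot\|_N\leq\vertiii{\cdot}_N$ (the former truncates the column sum at $j=N$), this upgrades to $\|P(T)\pi_{n_l}(x)-e_0\|_N\leq 3$.

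For the tail I would use Proposition \ref{prop:tails}. Its hypothesis $1\leq i\leq\pos(\Delta_{n_l+1},0)$ matches the degree range of $P$ exactly, and combined with $N_{n_l}=N$ it gives
\begin{equation*}
  \|P(T)t_{n_l}\|_N
  \leq \sum_{i=1}^{\pos(\Delta_{n_l+1},0)}|c_i|\,\|T^it_{n_l}\|_N
  \leq \frac{\|t_{n_l}\|_{N+2}}{D_{n_l}}\sum_{i=1}^{\pos(\Delta_{n_l+1},0)}|c_i|
  \leq \|t_{n_l}\|_{N+2}.
\end{equation*}
Since $(e_k)$ is a Schauder basis of $s^\N$, the partial-sum projections satisfy $\pi_{n_l}(x)\to x$, so $\|t_{n_l}\|_{N+2}\to 0$, and in particular is at most $1$ for $l$ large. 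Adding the head and tail bounds,
\begin{equation*}
  \|P(T)x-e_0\|_N
  \leq \|P(T)\pi_{n_l}(x)-e_0\|_N+\|P(T)t_{n_l}\|_N
  \leq 3+1=4.
\end{equation*}

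All of the real difficulty is already behind us; the inductive choice of $(a_n),(b_n),(s_n),(D_n)$ in Section \ref{sec:parameters} was designed precisely so that the coefficient bound $D_n$ produced by the head estimate cancels against the $1/D_n$ decay produced by the tail estimate. The only minor point to watch in the final assembly is the interplay between the two norms $\|\cdot\|_N$ and $\vertiii{\cdot}_N$, handled by the elementary inequality between them on $s^\N_0$. The main obstacle, if any, is thus not in this closing argument but in arranging the construction so that the two estimates line up — work that has been carried out in the preceding sections.
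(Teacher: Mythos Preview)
Your argument is correct and follows the same route as the paper: normalize $x$ so that Corollary~\ref{cor:last} applies, split $x=\pi_{n_l}(x)+t_{n_l}$, bound the head via Corollary~\ref{cor:last} (passing from $\vertiii{\cdot}_N$ to $\|\cdot\|_N$), bound the tail via Proposition~\ref{prop:tails} so that the factor $D_{n_l}$ cancels, and add to get $3+1=4$. The only cosmetic difference is that you estimate the tail with $\sum_i|c_i|\,\|T^it_{n_l}\|_N$ while the paper uses $D_{n_l}\max_i\|T^it_{n_l}\|_N$; both lead to the same bound.
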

\begin{proof}
Let $T \colon s^\N \to s^\N$ be the operator defined by \eqref{eq:operator}.
From Proposition \ref{prop:values} it follows that $T$ is a perturbed
weighted forward shift and therefore $e_0$ is a cyclic vector for $T$.
We need to show that every non-zero $x \in s^\N$ is also a cyclic vector for $T$.

Let
\[
  0 \neq x = \left(x_{i,j}\right)_{i,j=0}^\infty \in s^\N
\]
be arbitrary and $k_0$ be the smallest integer for which
\[
  \left|(x_{i,k_0})_{\vphantom{j}i=0}^\infty\right|_0 \neq 0.
\]
Since any non-zero multiple of a cyclic vector is also a cyclic vector, we may assume with no loss of generality that
\begin{equation*}
  \left|(x_{i,k_0})_{\vphantom{j}i=0}^\infty\right|_0 = 2.
\end{equation*}
By Proposition \ref{prop:strategy}, in order to show that $x$ is a cyclic vector for $T$ we need to prove that for every $N \in \N$
there is a polynomial $P$ such that
\[
  \|P(T)x-e_0\|_N\leq 4.
\]
Fix $N \in \N$ and let $(n_l)_{l=0}^\infty$
be an increasing sequence of natural numbers such that $N_{n_l}=N$ for every
$l\in \N$. For every $n \in \N$ let $\pi_n$ be the projection defined by \eqref{def:pi_n}.
It is clear (see \eqref{def:tails}) that
\[
  x-\pi_n(x) \in T_n
\]
and that for every $n$ large enough we have
\begin{equation*}
\|x-\pi_n(x)\|_{N+2} \leq 1.
\end{equation*}
Using this and Corollary \ref{cor:last} we can find $l$ large enough
and a polynomial $\displaystyle P(t) = \sum_{i=1}^{\pos(\Delta_{n_l+1},0)}c_it^i$
such that
\begin{equation}
  \label{eq:end}
 \sum_{i=1}^{\pos(\Delta_{n_l+1},0)}|c_i| \leq D_{n_l},
 \quad
 \vertiii{P(T)\pi_{n_l}(x)-e_0}_{N} \leq 3
 \quad
 \text{and}
 \quad
 \|x-\pi_{n_l}(x)\|_{N+2} \leq 1.
\end{equation}
We have
\begin{align*}
  \|P(T)x-e_0\|_N
  & = \|P(T)\pi_{n_l}(x) + P(T)(x-\pi_{n_l}(x))-e_0\|_N\\
  & \leq \|P(T)\pi_{n_l}(x)-e_0\|_N
          +\|P(T)(x-\pi_{n_l}(x))\|_N\\
  & \leq \vertiii{P(T)\pi_{n_l}(x)-e_0}_{N}+D_{n_l}
         \max_{1 \leq i \leq \pos(\Delta_{n_l+1},0)} \|T^i(x-\pi_{n_l}(x))\|_N \\
  & \overset{\mathclap{\ref{prop:tails}}}{\leq}
      \vertiii{P(T)\pi_{n_l}(x)-e_0}_{N}
       + D_{n_l}\frac{\|x-\pi_{n_l}(x)\|_{N+2}}{D_{n_l}}\\
  & \overset{\mathclap{\eqref{eq:end}}}{\leq}
    3+1=4.\qedhere
\end{align*}
\end{proof}

\bibliographystyle{plain}
\bibliography{baza}

\begin{thebibliography}{10}

\bibitem{MR701260}
A.~Atzmon.
\newblock An operator without invariant subspaces on a nuclear {F}r\'{e}chet
  space.
\newblock {\em Ann. of Math. (2)}, 117(3):669--694, 1983.

\bibitem{MR2533318}
F.~Bayart and E.~Matheron.
\newblock {\em Dynamics of linear operators}, volume 179 of {\em Cambridge
  Tracts in Mathematics}.
\newblock Cambridge University Press, Cambridge, 2009.

\bibitem{MR0473871}
P.~Enflo.
\newblock On the invariant subspace problem in {B}anach spaces.
\newblock In {\em S\'{e}minaire {M}aurey--{S}chwartz (1975--1976) {E}spaces
  {$L^{p}$}, applications radonifiantes et g\'{e}om\'{e}trie des espaces de
  {B}anach, {E}xp. {N}os. 14-15}, page~7. Centre Math., \'{E}cole Polytech.,
  Palaiseau, 1976.

\bibitem{MR892591}
P.~Enflo.
\newblock On the invariant subspace problem for {B}anach spaces.
\newblock {\em Acta Math.}, 158(3-4):213--313, 1987.

\bibitem{MR2863862}
M.~Goli\'{n}ski.
\newblock Invariant subspace problem for classical spaces of functions.
\newblock {\em J. Funct. Anal.}, 262(3):1251--1273, 2012.

\bibitem{MR3077885}
M.~Goli\'{n}ski.
\newblock Operator on the space of rapidly decreasing functions with all
  non-zero vectors hypercyclic.
\newblock {\em Adv. Math.}, 244:663--677, 2013.

\bibitem{MR1483073}
R.~Meise and D.~Vogt.
\newblock {\em Introduction to functional analysis}, volume~2 of {\em Oxford
  Graduate Texts in Mathematics}.
\newblock The Clarendon Press, Oxford University Press, New York, 1997.
\newblock Translated from the German by M. S. Ramanujan and revised by the
  authors.

\bibitem{MR3866905}
Q.~Menet.
\newblock Invariant subspaces for non-normable {F}r\'{e}chet spaces.
\newblock {\em Adv. Math.}, 339:495--539, 2018.

\bibitem{MR749447}
C.~J. Read.
\newblock A solution to the invariant subspace problem.
\newblock {\em Bull. London Math. Soc.}, 16(4):337--401, 1984.

\bibitem{MR806634}
C.~J. Read.
\newblock A solution to the invariant subspace problem on the space {$l_1$}.
\newblock {\em Bull. London Math. Soc.}, 17(4):305--317, 1985.

\bibitem{MR959046}
C.~J. Read.
\newblock The invariant subspace problem for a class of {B}anach spaces. {II}.
  {H}ypercyclic operators.
\newblock {\em Israel J. Math.}, 63(1):1--40, 1988.

\bibitem{MR950973}
C.~J. Read.
\newblock The invariant subspace problem on a class of nonreflexive {B}anach
  spaces. {I}.
\newblock In {\em Geometric aspects of functional analysis (1986/87)}, volume
  1317 of {\em Lecture Notes in Math.}, pages 1--20. Springer, Berlin, 1988.

\end{thebibliography}
\end{document}